\newcommand{\spn}{\mathop\mathrm{span}}
\newcommand{\rank}{\mathop\mathrm{rank}}
\newtheorem{thm}{Theorem}[section]
\newtheorem{theorem}[thm]{Theorem}
\newtheorem{corollary}[thm]{Corollary}
\newtheorem{example}[thm]{Example}
\newtheorem{lemma}[thm]{Lemma}
\newtheorem{proposition}[thm]{Proposition}
\newtheorem{definition}[thm]{Definition}
\theoremstyle{remark}
\newtheorem{remark}[thm]{Remark}
\newtheorem{ex}[thm]{Example}
\newcommand{\RR}{\mathbb R}
\newcommand{\NN}{\mathbb N}
\newcommand{\R}{\mathbb R}
\numberwithin{equation}{section}
\newcommand{\F}{\mathcal X}
\begin{document}

	\title{ Piecewise scalable frames}
	\author[Casazza, De Carli, Tran]{Peter G. Casazza, Laura De Carli, Tin T. Tran}
	\address{Casazza: Department of Mathematics, University
of Missouri, Columbia, MO 65211-4100.}
\address{De Carli/Tran: Department of Mathematics and Statistics, Florida International University, Miami, FL 33199.}
\subjclass{42C15}

\subjclass{42C15}
	
	\thanks{The first author was supported by
		NSF DMS 1609760}
	
	\email{casazzapeter40@gmail.com }
	\email{decarlil@fiu.edu}
\email{ttran@fiu.edu}
	\begin{abstract}
	 In this paper we define ``piecewise scalable frames". This new scaling process allows us to alter many frames to Parseval frames which is impossible by the previous standard scaling. We give necessary and sufficient conditions for a frame to be piecewise scalable. We show that piecewise scalability is preserved under unitary transformations. Unlike standard scaling, we show that all frames in $\RR^2$ and $\RR^3$ are piecewise scalable. We also show that if the frame vectors are close to each other, then they might not be piecewise scalable. Several properties of scaling constants are also presented. 

\vspace{0.2 in}

{\it Keywords: finite frames, scalable frames, orthogonal projections}
	\end{abstract}

	\maketitle
	
	\section{Introduction}
	
Hilbert space frame theory has become one of the most broadly applied subjects in Applied Mathematics today \cite{CG, CL, C, W}. This subject
 originated in the work of Duffin and 
Schaeffer \cite{DS} when they were studying non-harmonic Fourier series. 

\begin{definition}
A family of vectors $\{x_i\}_{i=1}^m$ in an  $n$-dimensional Hilbert space $\mathbb{H}^n$ is a {\bf frame} if there are constants $0<A\le B<\infty$ satisfying:
\[ A\|x\|^2 \le \sum_{i=1}^m|\langle x,x_i\rangle|^2 \le B\|x\|^2,\mbox{ for all }x\in \mathbb{H}^n.\]
\end{definition}
If $A=B$ this is an {\it A-tight frame} and if $A=B=1$ this is a {\it Parseval frame}.  The largest $A$ and smallest $B$ satisfying
this inequality are called the {\it lower} (respectively, {\it upper}) {\it frame bound}.  The frame is said to be {\it unit-norm} if all elements have norm one. The {\it analysis operator} of the frame is the
operator $T: \mathbb{H}^n\rightarrow \ell_2(m)$ given by $Tx= \{\langle x,
 x_i\rangle\}_{i=1}^m$. The {\it synthesis operator} of the frame
is $T^*:\ell_2(m)\rightarrow \mathbb{H}^n$ and satisfies $T^*(\{a_i\}_{i=1}^m)=\sum_{i=1}^ma_ix_i$.  The {\it frame operator} is $S=T^*T$ and is the
positive, self-adjoint, invertible operator $S: \mathbb{H}^n\rightarrow \mathbb{H}^n$ given by:
\[ S(x)=\sum_{i=1}^m\langle x, x_i\rangle x_i.\]
$B$ is the largest eigenvalue of $S$ and $A$ is the smallest eigenvalue of $S$.
The quotient $B/A$ is called the {\it condition number}. 

We recover vectors by the formula:

\[ x=S^{-1}S(x)=\sum_{i=1}^m\langle x ,x_i\rangle S^{-1}x_i=\sum_{i=1}^m\langle x,S^{-1/2}x_i\rangle S^{-1/2}x_i.\]
It follows that$\{S^{-1/2}x_i\}_{i=1}^m$ is a Parseval frame.
This requires inverting the frame operator which might be difficult. Since  a frame is $A$-tight if and only if 
$Sx= Ax$ for all $x \in \mathbb{H}^n$, Parseval frames are the most desirable since $S=I$. 
So we want
to alter a frame in a simple manner to make it Parseval. 
\begin{definition}
  We say that a frame $\{x_i\}_{i=1}^m\subset \R^n$   is {\bf scalable} if there exist  constants $c_1, c_2,\ldots, c_m\in \RR$ (the {\it scaling constants} of the frame) for which 
  	$\{c_ix_i\}_{i=1}^m$ is a Parseval frame for $\R^n$.
	\end{definition}
  That is, a frame $\F$ is scalable with constants $\{c_i\}_{i=1}^m$  if, for every $x\in\R^n$, we have that 
  $$ \sum_{i=1}^m |\langle x, c_ix_i\rangle|^2 = \|x\|^2, $$ 
  where  $\langle \cdot , \cdot \rangle $ and  $\|\cdot\|  $ denote the standard scalar product and norm in $\R^n$.
  We can let $X=[x_i]_{i=1}^m$  be the matrix whose columns are the vectors $x_i$, and  observe that a frame is scalable if and only if there exists a diagonal matrix $D$ for which  the columns of the matrix $XD$ form a Parseval frame.

  There is a lot of literature on the subject of scalable frames \cite{CC,CCH,CDT,CX, CK,  CKO,CKL,DK, KO, KOP}.
  Unfortunately the frames for which  this standard scaling process is possible are very few. 
  Also, these papers scale the frame by using a very large number of $c_i=0$, which make this process unusable in practice where all the frame vectors
  are needed for the application.
  In this paper, we give a more general definition of scaling  
  and we  study the properties of frames that can be transformed into Parseval frames  using this definition. 

 \begin{definition}\label{D1}
		  A frame $\F=\{x_i\}_{i=1}^m\subset \R^n$ is {\bf   piecewise
		  	scalable}  if  there exist an orthogonal  projection $P:\R^n\to\R^n$ and   constants $\{a_i, b_i\}_{i=1}^m$
			 so that $\{a_iPx_i+b_i(I -P)x_i\}_{i=1}^m$ is a Parseval frame for $\R^n$.  
	\end{definition} 
 
 Here, $I=I_n$ denotes the identity operator in $\R^n$. When there is no ambiguity, we  also denote with  $I$   the $n\times n$ identity matrix.   We recall that an  orthogonal  projection is a linear operator $P:\R^n\to\R^n$  such that $P\circ P=P $ and $P^*=P$. If a frame is piecewise scalable with an orthogonal projection $P$, we sometimes say that it is $P$-piecewise scalable.  
 Thus, a frame $\F$ is piecewise scalable if there exist an orthogonal projection $P$ and constants $\{a_i, b_i\}_{i=1}^m $ such that for every $x\in\R^n$, 
 $$
 \sum_{i=1}^m |\langle x, a_iPx_i+b_i(I-P)x_i\rangle|^2 = \|x\|^2.
 $$
  Equivalently, a frame is piecewise scalable if there exist   an  orthogonal projection $P$ and  diagonal matrices  $D_1 $ and $D_2 $ for which the columns of  $PXD_1 + (I-P)XD_2 $ form a Parseval frame.
  Clearly, if $D_1=D_2$ we are back to the definition of scalable frames. So scalable frames are piecewise scalable.
  Note that  the piecewise scalability of a frame $\F=\{x_i\}_{i=1}^m$ is preserved under the replacement $x_i$ with $-x_i$ for any $i$. 

\medskip
It is not difficult to find  piecewise scalable frames which are not scalable. For example, in $\mathbb{R}^2$, a frame $\{x_i\}_{i=1}^2$ is scalable if and only if the
vectors are orthogonal. But we have a strong result for piecewise scalable.

\begin{theorem}
A frame $\{x_i\}_{i=1}^2$ in $\mathbb{R}^2$ is piecewise scalable by every non-trivial orthogonal projection $P$.
\end{theorem}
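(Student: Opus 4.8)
The plan is to reduce the statement to a short linear-algebra verification after passing to coordinates adapted to $P$. Since $P$ is a non-trivial orthogonal projection on $\R^2$, it has rank $1$; let $u$ be a unit vector spanning the range of $P$ and $v$ a unit vector spanning its kernel, so that $\{u,v\}$ is an orthonormal basis of $\R^2$ with $Pu=u$ and $Pv=0$. Writing each frame vector in these coordinates as $x_i=p_iu+q_iv$, where $p_i=\l x_i,u\r$ and $q_i=\l x_i,v\r$, the piecewise-scaled vectors become
$$ y_i:=a_iPx_i+b_i(I-P)x_i=a_ip_i\,u+b_iq_i\,v,\qquad i=1,2. $$
Thus in the basis $\{u,v\}$ the vector $y_i$ has coordinates $(a_ip_i,\,b_iq_i)$, and the two coordinates can be rescaled independently through the free parameters $a_i,b_i$.

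Next I would use the fact that a Parseval frame of exactly $n$ vectors in an $n$-dimensional space is an orthonormal basis; in particular, $\{y_1,y_2\}$ is a Parseval frame for $\R^2$ if and only if it is an orthonormal basis. (This holds because $\sum_i\|y_i\|^2=\tr S=2$, while $\|y_i\|\le 1$ for every element of a Parseval frame, forcing $\|y_1\|=\|y_2\|=1$ and then $\l y_1,y_2\r=0$.) So it suffices to exhibit a choice of $a_i,b_i$ for which $\{y_1,y_2\}=\{u,v\}$ as an orthonormal basis.

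The main idea is to scale the two frame vectors directly onto $u$ and $v$, in one of the two possible orders. Mapping $x_1\mapsto u$ and $x_2\mapsto v$ requires $p_1\neq0$ and $q_2\neq0$, in which case we set $a_1=1/p_1$, $b_1=0$, $a_2=0$, $b_2=1/q_2$, giving $y_1=u$ and $y_2=v$. If instead $p_1=0$ or $q_2=0$, I would attempt the other order, $x_1\mapsto v$ and $x_2\mapsto u$, which requires $q_1\neq0$ and $p_2\neq0$ and is realized analogously.

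The step that needs care is showing that at least one of these two orders always succeeds, and this is exactly where the frame hypothesis enters. Both orders fail only if ($p_1=0$ or $q_2=0$) and ($q_1=0$ or $p_2=0$); expanding the four resulting cases yields either a zero frame vector ($p_1=q_1=0$ or $p_2=q_2=0$) or two vectors lying on a common coordinate line ($p_1=p_2=0$ or $q_1=q_2=0$), and each of these contradicts the linear independence of $x_1,x_2$ guaranteed by their forming a frame for $\R^2$. Hence one of the two constructions is always available, producing the required orthonormal basis and proving that $\{x_1,x_2\}$ is $P$-piecewise scalable for every non-trivial orthogonal projection $P$.
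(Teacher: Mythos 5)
Your proof is correct and is essentially the paper's argument written in coordinates adapted to $P$: both proofs zero out one component of each frame vector, normalize the other, and obtain an orthonormal basis, with the frame (spanning) hypothesis ruling out the degenerate cases. Your case analysis showing that one of the two assignments always works is in fact spelled out more explicitly than in the paper's version.
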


\begin{proof}
Let $P$ be a non-trivial orthogonal projection on $\mathbb{R}^2$. It is not possible that $Px_1=Px_2=0$ or $P$ is trivial. Also, it is not possible that 
$Px_i=0=(I-P)x_i$ or $x_i=0$. It follows that there is $i\not= j \in \{1,2\}$ so that $Px_i\not= 0\not= (I-P)x_j$. So letting $a_i=\frac{1}{\|Px_i\|},\ a_j=0$ and
$b_i=0,\ b_j=\frac{1}{\|(I-P)x_j\|}$, then 
$\{a_iPx_i+b_i(I-P)x_i\}_{i=1}^2$ is an orthonormal basis for $\RR^2$, which is a Parseval frame.
\end{proof}

Another reason why piecewise scalable frames are interesting is that they are  both  a generalization and a special case  of  scalable frames. Indeed,   if frame $\F$ is  scalable, then it is also piecewise scalable (with constants $a_i=b_i$) with respect to any orthogonal projections.   Also, if  $\F$ is  piecewise scalable  with an orthogonal projection $P$, 
then the sets  $ \{Px_i\}_{i=1}^m$ and  $\{ (I-P)x_i\}_{i=1}^m$  are scalable frames  in the Hilbert spaces  $P(\R^n)$ and $(I-P)(\R^n)   $, respectively, (see Theorem \ref{T-piecewise-1}).

Throughout, we will denote vectors  $x\in\R^n$ with  $x= (x(1), \ldots, x(n))$.  We denote with $e_1=(1, 0, \ldots, 0), \ldots,  e_n= (0, \ldots, 0, 1)$ the vectors of the canonical basis of $\R^n$. When  $m\in\NN$, we use $[m]$ to denote the set $\{1,\ldots, m\}$.

The remainder of the paper is organized as follows. In Section 2 we give several necessary and sufficient conditions for a frame to be piecewise scalable. In Section 3, we show that piecewise scalability is preserved under unitary transformations and give some related results. In Section 4 we show that frames in $\R^3$ are always piecewise scalable.  We also show that if the frame vectors are close to each other, then it cannot be piecewise scalable for almost all orthogonal projections. Finally, in Section 5, we present some properties of scaling constants.

\section{Some results and examples }

In this section we prove necessary and sufficient conditions for piecewise scalabilty of frames in $\RR^n$. The main result of this section is the following.
\begin{theorem}\label{T-piecewise-1} 
	Let $\F=\{x_i\}_{i=1}^m$ be a frame for $\RR^n$. The following are equivalent:
	\begin{enumerate}
		\item $\F$ is piecewise scalable.
		\item There is an orthogonal projection $P$ and scalars $\{a_i\}_{i=1}^m$ and $\{b_i\}_{i=1}^m$ so that:
		\begin{enumerate}
			\item Both $\{Px_i\}_{i=1}^m$ and $\{(I-P)x_i\}_{i=1}^m$ are scalable frames with scaling constants $\{a_i\}_{i=1}^m$ and
			$\{b_i\}_{i=1}^m$ in the range of $P$ and $I-P$, respectively.
			\item For all $x\in \RR^n$, we have that 
			\[\sum_{i=1}^m a_ib_i\langle x,Px_i\rangle \langle x,(I-P)x_i\rangle = 0. \]
		\end{enumerate}
	\end{enumerate}
\end{theorem}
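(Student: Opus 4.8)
The plan is to prove both implications by expanding the single Parseval identity that defines piecewise scalability and then separating the contributions of the two orthogonal subspaces. Fix an orthogonal projection $P$ and scalars $\{a_i\}_{i=1}^m$, $\{b_i\}_{i=1}^m$. Since $P^*=P$ and $(I-P)^*=I-P$, for every $x\in\RR^n$ I have $\langle x, a_iPx_i+b_i(I-P)x_i\rangle = a_i\langle x,Px_i\rangle + b_i\langle x,(I-P)x_i\rangle$, and squaring and summing (all scalars being real) yields
\[ \sum_{i=1}^m |\langle x, a_iPx_i+b_i(I-P)x_i\rangle|^2 = \sum_{i=1}^m |\langle x,a_iPx_i\rangle|^2 + \sum_{i=1}^m |\langle x,b_i(I-P)x_i\rangle|^2 + 2\sum_{i=1}^m a_ib_i\langle x,Px_i\rangle\langle x,(I-P)x_i\rangle. \]
I will use repeatedly the projection identities $\langle x,Px_i\rangle = \langle Px,Px_i\rangle$ and $\langle x,(I-P)x_i\rangle = \langle (I-P)x,(I-P)x_i\rangle$, so that the first sum on the right depends only on $Px$, the second only on $(I-P)x$, and each factor in the cross term on the respective component.

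For $(1)\Rightarrow(2)$, assume the left-hand side equals $\|x\|^2$ for all $x$. First restrict $x$ to the range of $P$: then $(I-P)x=0$ forces the second and third sums to vanish, leaving $\sum_{i=1}^m|\langle x,a_iPx_i\rangle|^2=\|x\|^2$ for all $x\in P(\RR^n)$. Since $\{x_i\}_{i=1}^m$ spans $\RR^n$, the set $\{Px_i\}_{i=1}^m$ spans $P(\RR^n)$, so this says exactly that $\{Px_i\}_{i=1}^m$ is scalable in $P(\RR^n)$ with constants $\{a_i\}$. The symmetric choice $x\in (I-P)(\RR^n)$ gives the same statement for $\{(I-P)x_i\}_{i=1}^m$ with $\{b_i\}$, which establishes (2)(a). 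Substituting (2)(a) back, the first two sums on the right collapse to $\|Px\|^2+\|(I-P)x\|^2=\|x\|^2$ for every $x$, so the Parseval identity forces the cross term to vanish identically, which is precisely (2)(b).

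For $(2)\Rightarrow(1)$ I simply reverse this reasoning: (2)(a) gives $\sum_{i=1}^m|\langle x,a_iPx_i\rangle|^2=\|Px\|^2$ and $\sum_{i=1}^m|\langle x,b_i(I-P)x_i\rangle|^2=\|(I-P)x\|^2$ for all $x$ (again via the projection identities), (2)(b) kills the cross term, and the displayed expansion then reads $\|Px\|^2+\|(I-P)x\|^2=\|x\|^2$, so $\{a_iPx_i+b_i(I-P)x_i\}_{i=1}^m$ is Parseval. The only genuinely delicate point, and the step I would handle most carefully, is the passage between ``scalable as a frame for the subspace $P(\RR^n)$'' and the global identity $\sum_{i=1}^m|\langle x,a_iPx_i\rangle|^2=\|Px\|^2$ valid for all $x\in\RR^n$: this hinges on the fact that $a_iPx_i\in P(\RR^n)$ makes $\langle x,a_iPx_i\rangle=\langle Px,a_iPx_i\rangle$, so the global sum is governed entirely by the component $Px$. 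Once this reduction to the two orthogonal subspaces is in place, both directions are routine algebra.
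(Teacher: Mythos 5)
Your proof is correct and follows essentially the same route as the paper: expand the square in the Parseval identity, identify the two subspace terms and the cross term, and read off (2)(a) and (2)(b). The only cosmetic difference is that you obtain (2)(a) by restricting test vectors to the ranges of $P$ and $I-P$, whereas the paper invokes the fact that the image of a Parseval frame under an orthogonal projection is a Parseval frame for the range; these are equivalent observations.
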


\begin{proof} 
	$(1)\Rightarrow(2)$:
	By (1), there are scalars $\{a_i\}_{i=1}^m$ and $\{b_i\}_{i=1}^m$ and an orthogonal projection $P$ so that $\{a_iPx_i+b_i(I-P)x_i\}_{i=1}^m$ is a Parseval
	frame. Since the projection of a Parseval frame is a Parseval frame, and
	$$ P\left ( a_iPx_i+b_i(I-P)x_i \right )= a_iPx_i,$$
	it follows that $\{a_iPx_i\}_{i=1}^m$ is a Parseval frame. Similarly, $\{b_i(I-P)x_i\}_{i=1}^m$ is a Parseval frame and (2)(a) is proved.

We now prove (2)(b). Let $x\in\R^n$;  in view of (2)(a) and the fact that $\{a_iPx_i+b_i(I-P)x_i\}_{i=1}^m$ is a Parseval frame, we can write the following chain of identities. 
	\begin{align*}
		\|x\|^2&=\sum_{i=1}^m|\langle x,a_iPx_i+b_i(I-P)x_i \rangle|^2\\
		&= \sum_{i=1}^m| \langle x,a_iPx_i\rangle +\langle x,b_i(I-P)x_i\rangle|^2\\
		&= \sum_{i=1}^m|\langle x,a_iPx_i\rangle |^2+\sum_{i=1}^m|\langle x,b_i(I-P)x_i\rangle|^2+
		2\sum_{i=1}^m\langle x,a_iPx_i\rangle\langle x,b_i(I-P)x_i\rangle\\
		&= \|Px\|^2+\|(I-P)x\|^2 + 2\sum_{i=1}^m\langle x,a_iPx_i\rangle\langle x,b_i(I-P)x_i\rangle\\
		&= \|x\|^2+  2\sum_{i=1}^m\langle x,a_iPx_i\rangle\langle x,b_i(I-P)x_i\rangle.
	\end{align*}
	Thus, $ \sum_{i=1}^m\langle x,a_iPx_i\rangle\langle x,b_i(I-P)x_i\rangle=0$  and  (2)(b) is proved.
	
	\vskip12pt
	$(2)\Rightarrow (1)$: We need to show that $\{a_iPx_i+b_i(I-P)x_i\}_{i=1}^m$ is a Parseval frame. Let $x\in\RR^n$; using (2)(a) and (2)(b), we obtain
	\begin{align*}
		&\quad \sum_{i=1}^m|\langle x,a_iPx_i +b_i(I-P)x_i \rangle|^2\\
		&= \sum_{i=1}^m|\langle x,a_iPx_i\rangle |^2+\sum_{i=1}^m|\langle x,b_i(I-P)x_i\rangle|^2+ 
		2\sum_{i=1}^m\langle x,a_iPx_i\rangle\langle x,b_i(I-P)x_i\rangle\\
		&= \|Px\|^2+\|(I-P)x\|^2 +  
		2\sum_{i=1}^ma_ib_i\langle x, Px_i\rangle\langle x, (I-P)x_i\rangle 
		 = \|x\|^2.
	\end{align*}
	So (1) is proved.
\end{proof}

\begin{corollary} \label{Cor1}
 
	 If $\F=\{x_i\}_{i=1}^m\subset \R^n$ is   piecewise scalable  with projection $P$, then the set  $\{Px_i\}_{i=1}^m\cup \{ (I-P)x_i\}_{i=1}^m$ is a scalable frame for $\R^n$.
	 	
\end{corollary}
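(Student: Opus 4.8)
The plan is to deduce this directly from Theorem~\ref{T-piecewise-1} rather than from the bare definition. Since $\F$ is piecewise scalable with projection $P$, statement (2) of that theorem furnishes scalars $\{a_i\}_{i=1}^m$ and $\{b_i\}_{i=1}^m$ such that, by (2)(a), $\{a_iPx_i\}_{i=1}^m$ is a Parseval frame for the range of $P$ and $\{b_i(I-P)x_i\}_{i=1}^m$ is a Parseval frame for the range of $I-P$. The natural candidate for scaling the combined family $\{Px_i\}_{i=1}^m\cup\{(I-P)x_i\}_{i=1}^m$ is then to use the constants $\{a_i\}$ on the first block of $m$ vectors and the constants $\{b_i\}$ on the second block, and to show that the resulting family $\{a_iPx_i\}_{i=1}^m\cup\{b_i(I-P)x_i\}_{i=1}^m$ is a Parseval frame for all of $\R^n$.

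To verify this, I would fix $x\in\R^n$ and split the frame sum over the two blocks. The key reduction is that, because $P$ is self-adjoint and idempotent and the two ranges are orthogonal, one has $\langle x, Px_i\rangle=\langle Px, x_i\rangle=\langle Px, Px_i\rangle$, since $Px$ is orthogonal to $(I-P)x_i$. Hence the first block contributes $\sum_{i=1}^m|\langle x, a_iPx_i\rangle|^2=\sum_{i=1}^m|\langle Px, a_iPx_i\rangle|^2$, which equals $\|Px\|^2$ by the Parseval property of $\{a_iPx_i\}$ on the range of $P$, applied to the vector $Px$ lying in that range. The symmetric computation gives $\sum_{i=1}^m|\langle x, b_i(I-P)x_i\rangle|^2=\|(I-P)x\|^2$.

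Adding the two contributions yields $\|Px\|^2+\|(I-P)x\|^2=\|x\|^2$ by the Pythagorean identity for the orthogonal decomposition $x=Px+(I-P)x$, so the combined scaled family is Parseval and therefore $\{Px_i\}_{i=1}^m\cup\{(I-P)x_i\}_{i=1}^m$ is scalable.

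The only real subtlety, and the point I expect to be the main obstacle to state cleanly, is the passage from $\langle x,\cdot\rangle$ to $\langle Px,\cdot\rangle$ (and $\langle (I-P)x,\cdot\rangle$): the frames $\{a_iPx_i\}$ and $\{b_i(I-P)x_i\}$ are Parseval only on their respective subspaces, not on all of $\R^n$, so one must use the self-adjointness of $P$ together with the orthogonality of the two ranges to restrict the test vector correctly before invoking (2)(a). It is worth remarking that, unlike the construction of the single combined frame $\{a_iPx_i+b_i(I-P)x_i\}$ in Theorem~\ref{T-piecewise-1}, the cross-term condition (2)(b) is \emph{not} needed here, because the two blocks of vectors live in orthogonal subspaces and their frame sums simply add.
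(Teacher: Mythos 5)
Your proposal is correct and follows essentially the same route as the paper: invoke Theorem~\ref{T-piecewise-1}(2)(a) to get the constants $\{a_i\}$ and $\{b_i\}$, replace $x$ by $Px$ (resp.\ $(I-P)x$) in each block using self-adjointness of $P$, and add $\|Px\|^2+\|(I-P)x\|^2=\|x\|^2$. Your remark that the cross-term condition (2)(b) is not needed here is accurate and matches the paper's argument, which likewise never uses it.
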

\begin{proof}
By Theorem  \ref{T-piecewise-1} (2)(a), there exist constants $\{a_i, b_i\}_{i=1}^m$ so that both  
$\{a_iPx_i\}_{i=1}^m$ and $\{b_i(I-P)x_i\}_{i=1}^m$ are Parseval frames for  $P(\R^n)$ and \newline $(I-P)(\R^n)$, respectively.  Thus, for any $x\in \R^n$ we have that
\begin{align*}
	&\sum_{i=1}^m |\langle  x, a_iPx_i\rangle  |^2+ \sum_{i=1}^m |\langle x, b_i(I-P)x_i\rangle|^2
	\\
	= &\sum_{i=1}^m |\langle Px, a_iPx_i\rangle   |^2+ \sum_{i=1}^m |\langle(I-P)x, b_i(I-P)x_i\rangle|^2
\\   = & \|Px\|^2+ \|(I-P)x\|^2= \|x\|^2.   
\end{align*}
as required
 \end{proof}

  \begin{corollary}\label{Cor2}   Let $\F=\{x_i\}_{i=1}^m$ be a frame for $\R^n$. If there  exist an orthogonal projection $P$ and a subset $I\subset [m]$ such that $\{Px_i\}_{i\in I}$ and  $\{(I-P)x_i\}_{i\in I^c}$ are scalable in the range of $P$ and $ I-P$  respectively, then $\F$ is piecewise scalable.

   \end{corollary}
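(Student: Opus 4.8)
The plan is to deduce this directly from the characterization in Theorem \ref{T-piecewise-1}, using the very same orthogonal projection $P$ supplied by the hypothesis. The idea is to extend the two given families of scaling constants to all of $[m]$ by padding with zeros, arranged so that their supports are disjoint. Concretely, I would let $\{a_i\}_{i\in I}$ denote the constants for which $\{a_iPx_i\}_{i\in I}$ is a Parseval frame for $P(\R^n)$, and set $a_i=0$ for $i\in I^c$; symmetrically, I would let $\{b_i\}_{i\in I^c}$ be the constants for which $\{b_i(I-P)x_i\}_{i\in I^c}$ is a Parseval frame for $(I-P)(\R^n)$, and set $b_i=0$ for $i\in I$.

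With this choice, condition (2)(a) of Theorem \ref{T-piecewise-1} is immediate: appending zero scaling constants does not change the resulting frame, so $\{a_iPx_i\}_{i=1}^m=\{a_iPx_i\}_{i\in I}$ is still a Parseval frame for $P(\R^n)$, and likewise $\{b_i(I-P)x_i\}_{i=1}^m=\{b_i(I-P)x_i\}_{i\in I^c}$ is a Parseval frame for $(I-P)(\R^n)$. Thus $\{Px_i\}_{i=1}^m$ and $\{(I-P)x_i\}_{i=1}^m$ are scalable with constants $\{a_i\}_{i=1}^m$ and $\{b_i\}_{i=1}^m$ lying in the ranges of $P$ and $I-P$, respectively. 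The key point is then that condition (2)(b) holds for free: by construction the supports of $\{a_i\}$ and $\{b_i\}$ are the disjoint sets $I$ and $I^c$, so $a_ib_i=0$ for every index $i$, and hence the cross term
\[
\sum_{i=1}^m a_ib_i\langle x,Px_i\rangle\langle x,(I-P)x_i\rangle
\]
vanishes identically in $x$. Having verified both parts of condition (2), the implication $(2)\Rightarrow(1)$ of Theorem \ref{T-piecewise-1} shows that $\F$ is piecewise scalable with projection $P$.

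There is essentially no analytic obstacle in this argument; the entire content lies in the placement of the zeros so that the two scaling sequences are supported on complementary index sets. This is precisely what forces the orthogonality-type identity (2)(b) — the only genuinely nontrivial requirement in Theorem \ref{T-piecewise-1} — to be satisfied trivially. In particular, I would emphasize that one never needs any information about how $\{Px_i\}_{i\in I}$ and $\{(I-P)x_i\}_{i\in I^c}$ interact, which is exactly why this sufficient condition is so easy to apply in practice.
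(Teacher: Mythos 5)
Your proof is correct and follows exactly the paper's argument: extend the two families of constants by zero on the complementary index sets so that $a_ib_i=0$ for every $i$, making condition (2)(b) of Theorem \ref{T-piecewise-1} hold trivially while (2)(a) is unaffected, and then invoke the implication $(2)\Rightarrow(1)$. No differences worth noting.
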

\begin{proof}
 Let $\{a_i\}_{i\in I}$ and $\{b_i\}_{i\in I^c}$ so that $\{a_iPx_i\}_{i\in I}$ and $\{b_i(I-P)x_i\}_{i\in I^c}$ are Parseval frames for $P(\RR^n)$ and $(I-P)(\RR^n)$, respectively.  
  Let $a_i=0$ if  $i\in I^c$ and $b_i= 0$ if $i\in I$. Then, the sets $\{ Px_i\}_{i=1}^m$ and $\{ (I-P)x_i\}_{i=1}^m$ are scalable frames  in the range of $P$ and $ I-P$  with constants $\{a_i\}_{i=1}^m$ and $\{b_i\}_{i=1}^m$, and $a_ib_i=0$ for all $i\in [m]$. The conclusion follows from Theorem \ref{T-piecewise-1}.
\end{proof}

The following is a special case of Corollary \ref{Cor2} which we will use later on.

\begin{corollary}\label{C3}
	Let $\F=\{x_i\}_{i=1}^m$ be a frame for $\RR^n$.  If there exist a set $I\subset [m]$ of size $n-1$ and an orthogonal projection $P$ on $\R^n$  such that  
$\{Px_i\}_{i\in I}$ is  an orthogonal set of nonzero vectors, and $(I-P)x_j$ is nonzero for some $j\notin I$, then $\F$ is piecewise scalable.  
\end{corollary}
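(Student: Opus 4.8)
The plan is to derive this directly from Corollary \ref{Cor2}, applied with the given index set $I$ and its complement $I^c = [m]\setminus I$. The only substantive content is a dimension count that pins down $\rank P$; once that is settled, the scalability of each of the two pieces is immediate.

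First I would determine $\rank P$. Since $\{Px_i\}_{i\in I}$ is an orthogonal set of $n-1$ nonzero vectors, these vectors are linearly independent and all lie in $P(\R^n)$, so $\dim P(\R^n)\ge n-1$. On the other hand, the hypothesis that $(I-P)x_j\ne 0$ for some $j$ forces $I-P\ne 0$, hence $P\ne I$ and $\rank P\le n-1$. Combining the two bounds gives $\rank P = n-1$, and therefore $\rank(I-P)=1$.

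Next I would verify the two scalability conditions. Because $P(\R^n)$ is now known to be $(n-1)$-dimensional, the $n-1$ orthogonal nonzero vectors $\{Px_i\}_{i\in I}$ form an orthogonal basis of $P(\R^n)$; setting $a_i=\|Px_i\|^{-1}$ for $i\in I$ turns them into an orthonormal basis, i.e.\ a Parseval frame for $P(\R^n)$, so $\{Px_i\}_{i\in I}$ is scalable in the range of $P$. Similarly, $(I-P)(\R^n)$ is one-dimensional, and since $j\notin I$ we have $j\in I^c$ with $(I-P)x_j\ne 0$; taking $b_j=\|(I-P)x_j\|^{-1}$ and $b_i=0$ for the remaining $i\in I^c$ makes $\{b_i(I-P)x_i\}_{i\in I^c}$ a single unit vector spanning $(I-P)(\R^n)$, hence a Parseval frame for it. Thus $\{(I-P)x_i\}_{i\in I^c}$ is scalable in the range of $I-P$.

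With both hypotheses of Corollary \ref{Cor2} now verified, that corollary yields the piecewise scalability of $\F$, completing the argument. The main (and essentially only) subtlety is the rank computation: one must use both hypotheses in tandem — the orthogonal independence of the $n-1$ projected vectors to obtain $\rank P\ge n-1$, and the nonvanishing of $(I-P)x_j$ to obtain $\rank P\le n-1$ — so that $P(\R^n)$ has exactly the dimension that makes the orthogonal set a full basis rather than merely a scalable family.
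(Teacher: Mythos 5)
Your proof is correct and follows exactly the route the paper intends: the paper gives no separate argument for Corollary \ref{C3}, presenting it as a special case of Corollary \ref{Cor2}, and your write-up supplies precisely the missing details (the forced equality $\rank P=n-1$ and the normalizations making each projected family Parseval). Nothing further is needed.
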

 
 \begin{ex}  \label{exam2.5}

There exist bases for $\RR^n$ which are non-scalable but are piecewise scalable.

Indeed,  let $\{y_i\}_{i=1}^{n-1}$ be an orthogonal basis for $\RR^{n-1}$. 
Let $\epsilon >0$ and let
$$ x_i=(y_i, \epsilon) \mbox{ for } i=1, \ldots, n-1, \mbox{ and } x_{n}= (0,\, ...,\,0, 1).
$$ 
These vectors are linearly independent since $\det [x_i]_{i=1}^{n}=\det[y_i]_{i=1}^{n-1}\not=0$.

The set  $\{x_i\}_{i=1}^n$ is not scalable because the $x_i$ are not orthogonal,
but  it is a piecewise scalable frame for $\RR^n$. To see this, let $P$ be the orthogonal projection on the first $n-1$ coordinates. Then $\{Px_i\}_{i=1}^{n-1}=\{y_i\}_{i=1}^{n-1}$, which is an orthonormal set, and $(I-P)x_n=x_n$. By Corollary \ref{C3}, $\{x_i\}_{i=1}^n$ is piecewise scalable.
\end{ex}

With the same idea as in Example \ref{exam2.5}, we can construct many other examples of piecewise scalable frames with more vectors than dimension.

\begin{ex}
Let $\{y_i\}_{i=1}^m$ be a scalable frame for $\R^{n-1}$ and let $\{e_i\}_{i=1}^n$ be the canonical basis of $\RR^n$. Let $\{x_i\}_{i=1}^{m+1}$ be the frame for $\RR^n$ defined by

\[x_i=(y_i, \epsilon_i), i \in [m] \mbox { and } x_{m+1}=e_n.\] Then $\{x_i\}_{i=1}^{m+1}$ is a piecewise scalable frame for $\RR^n$.
\end{ex}

We mentioned in the introduction that one problem with scaling is that most of the time, scaling is done by setting most of the scaling coefficients
equal to zero. This problem does not always occur in piecewise scaling, as the following example shows. In this example in $\RR^4$, we have an arbitrarily
large number of vectors which have all coefficients but four equal to zero for scaling while all coefficients are non-zero for piecewise scaling.

\begin{ex}
Let $\{x_n\}_{n=1}^4$ be an orthonormal basis for $\RR^4$ and let $\{y_{2i},y_{2i+1}\}_{i=1}^m$ be a family of orthonormal bases for $\RR^2$.
For $n=1,2,\ldots,m$, we define a set of vectors in $\RR^4$ as in the following:
\[ x_{4n+1}=(y_{2n}, 2y_{2n}), \ x_{4n+2}=(y_{2n},-2y_{2n}),\]
\[ x_{4n+3}=(y_{2n+1},2y_{2n+1}),\ x_{4n+4}=(y_{2n+1},-2y_{2n+1}).\]
Consider the frame for $\RR^4$ given by: $ \{x_n\}_{n=1}^{4(m+1)}$. 
This frame is easily scalable with $a_i=1$ for $i=1,2,3,4$ and $a_i=0$ for $i\ge 5$.

Now we look at piecewise scaling. If we let $P$ be the orthogonal projection onto $\spn\{e_1,e_2\}$, then $\{Px_n\}_{n=5}^{4(m+1)}$ is a set of orthonormal bases for $\RR^2$. And 
$\{\frac{1}{2}(I-P)x_n\}_{n=5}^{4(m+1)}$ is also a set of orthonormal bases for $\spn\{e_3,e_4\}$. So we let
$a_i=b_i=1$ for $i=1,2,3,4$ and $a_i=\frac{1}{\sqrt{2}}$ and $b_i=\frac{1}{2\sqrt{2}}$ for  for $i\ge 5$. Then $\{a_nPx_n+b_n(I-P)x_n\}_{n=1}^{4(m+1)}=
\{z_n\}_{n=1}^{4(m+1)}$ where $\{z_i\}_{i=1}^4=\{x_i\}_{i=1}^4$ is an orthonormal basis for $\RR^4$ and
\[ z_{4n+1}=\dfrac{1}{\sqrt{2}}(y_{2n}, y_{2n}),\ z_{4n+1}=\dfrac{1}{\sqrt{2}}(y_{2n},- y_{2n}),\]
\[\ z_{4n+3}=\dfrac{1}{\sqrt{2}}(y_{2n+1}, y_{2n+1}),\ z_{4n+4}=\dfrac{1}{\sqrt{2}}(y_{2n+1},- y_{2n+1}).\]
 Note that for each $n\in [m]$, these four vectors form an
orthonormal basis for $\RR^4$.  So $\{a_nPx_n+b_n(I-P)x_n\}_{n=1}^{4(m+1)}$ is a set of $(m+1)$-orthonormal bases for $\RR^4$ and so multiplying all the vectors
by $\frac{1}{\sqrt{m+1}}$ produces a Parseval frame for $\RR^4$.
\end{ex}

\medskip

	Let $\F=\{x_i\}_{i=1}^m$ be a frame for $\RR^n$. Let $P$ be an orthogonal projection of $\RR^n$ and let $Y =P(\RR^n)$ and $Z =(I-P)(\RR^n)$.   Let $T_1,T_2$ be
	the analysis operators of $\{Px_i\}_{i=1}^m$ and $\{(I-P)x_i\}_{i=1}^m$, respectively.
		 We extend these operators to $\RR^n$ by defining them to be zero on the orthogonal complement of their domains. Also, if $V$ is a subspace of $\RR^n$, $I_V$ is the operator which is the identity on $V$ and zero on $V^{\perp}$.
	
	 The following theorem can be viewed as an  ``operator form" of Theorem \ref{T-piecewise-1}.
	
	\begin{theorem}\label{T2} With the notation and the definitions stated above,   the following are equivalent:
	\begin{enumerate}
		\item $\F$ is piecewise scalable with the projection $P$ and scaling  constants $\{a_i, b_i\}_{i=1}^m$.
		\item We have
		\[ T_1^*D_1^2T_1=I_{Y},\quad  T_2^*D_2^2T_2=I_{Z}  \mbox{ and }T_1^*D_1D_2T_2=T_2^*D_2D_1T_1=0,
		\] where $D_1,D_2$ are diagonal operators on $\ell_2(m)$ with diagonal elements $\{a_i\}_{i=1}^m$ and $\{b_i\}_{i=1}^m$ respectively.
	\end{enumerate}

\end{theorem}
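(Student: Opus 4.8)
The plan is to turn the statement ``$\F$ is piecewise scalable with $P$ and constants $\{a_i,b_i\}$'' into a single operator equation for the frame operator of the scaled system $\{a_iPx_i+b_i(I-P)x_i\}_{i=1}^m$, and then to recover the four conditions in $(2)$ by compressing that equation to the four blocks determined by $P$ and $I-P$. First I would identify the analysis operator of the scaled frame: since $\langle x,a_iPx_i+b_i(I-P)x_i\rangle=a_i(T_1x)(i)+b_i(T_2x)(i)$, the analysis operator is $D_1T_1+D_2T_2$, and hence its frame operator is
\[ S=(D_1T_1+D_2T_2)^*(D_1T_1+D_2T_2)=T_1^*D_1^2T_1+T_1^*D_1D_2T_2+T_2^*D_2D_1T_1+T_2^*D_2^2T_2, \]
where I have used that $D_1,D_2$ are self-adjoint and commute. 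Because $S$ is self-adjoint, the scaled system is a Parseval frame if and only if $S=I_n$; thus $(1)$ is equivalent to the single identity $S=I_n$.

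Next I would record the elementary interaction identities coming from the extension convention together with $P^2=P=P^*$. The range of $T_1^*$ lies in $Y$ and the range of $T_2^*$ lies in $Z$, so $PT_1^*=T_1^*$, $(I-P)T_1^*=0$, $PT_2^*=0$ and $(I-P)T_2^*=T_2^*$; taking adjoints (or computing directly) gives the dual relations $T_1P=T_1$, $T_1(I-P)=0$, $T_2P=0$ and $T_2(I-P)=T_2$. I would also note that, as operators on $\RR^n$, one has $P=I_Y$ and $I-P=I_Z$.

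With these in hand, $(2)\Rightarrow(1)$ is a direct substitution into the formula for $S$: the two cross terms are assumed zero and the diagonal terms give $S=I_Y+I_Z=I_n$. For $(1)\Rightarrow(2)$ I would compress the identity $S=I_n$ on the left and right by $P$ and by $I-P$. Using the interaction identities, every term of $S$ except $T_1^*D_1^2T_1$ is annihilated in the compression $PSP$, so $PSP=T_1^*D_1^2T_1$, while $P\,I_n\,P=P=I_Y$, yielding $T_1^*D_1^2T_1=I_Y$; symmetrically $(I-P)S(I-P)=T_2^*D_2^2T_2=I_Z$. The off-diagonal compressions isolate the cross terms, giving $PS(I-P)=T_1^*D_1D_2T_2$ and $(I-P)SP=T_2^*D_2D_1T_1$, each equal to $P(I-P)=0$, which are precisely the two remaining conditions.

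The only delicate point, and the one I would check most carefully, is the bookkeeping of the second paragraph: the extension of $T_1,T_2$ by zero off their domains is exactly what guarantees $PT_1^*=T_1^*$, $T_2P=0$, and the rest, on all of $\RR^n$, and one must verify each relation before the compressions can be read off term by term. It is worth stressing that this projection-compression route delivers the full operator identities $T_1^*D_1D_2T_2=0$ and $T_2^*D_2D_1T_1=0$ directly; by contrast, arguing through condition $(2)(b)$ of Theorem \ref{T-piecewise-1} would only produce the vanishing of a quadratic form, hence only the symmetric part of the cross term, so the present approach is cleaner and avoids that gap. Once the interaction identities are established, every remaining step is routine.
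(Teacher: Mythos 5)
Your proof is correct and follows essentially the same route as the paper: both identify the analysis operator of the scaled system as $D_1T_1+D_2T_2$, expand the frame operator $S=(T_1^*D_1+T_2^*D_2)(D_1T_1+D_2T_2)$ into the four block terms, and reduce $(1)$ to the identity $S=I$. The only difference is in $(1)\Rightarrow(2)$: the paper gets the two diagonal identities by invoking that orthogonal projections of Parseval frames are Parseval and then splits the vanishing sum $T_1^*D_1D_2T_2+T_2^*D_2D_1T_1=0$ into its two summands using the supports of $T_1,T_2$, whereas you extract all four identities at once by compressing $S=I$ with $P$ and $I-P$ --- a tidier but equivalent bookkeeping of the same facts $T_1P=T_1$, $T_2(I-P)=T_2$, $PT_1^*=T_1^*$, $(I-P)T_2^*=T_2^*$.
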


\begin{proof}
	It is easy to verify that the 
	analysis operator of the frame $\{a_iPx_i+b_i(I-P)x_i \}_{i=1}^m$ is $T=D_1T_1+D_2T_2$. So the synthesis operator is
	$T^*=T_1^*D_1+T_2^*D_2$ and the frame operator is $S= (T_1^*D_1+T_2^*D_2)( D_1T_1+D_2T_2)$. 
	
	$(1)\Rightarrow (2)$:
	Since $\{a_iPx_i+b_i(I-P)x_i\}_{i=1}^m$ is Parseval, we have that $S=I$ and $\{a_iPx_i\}_{i=1}^m$ and $\{b_i(I-P)x_i\}_{i=1}^m$ are Parseval frames for $Y$ and $Z$, respectively. Therefore,
	\begin{align*}
		I&=(T_1^*D_1+T_2^*D_2)( D_1T_1+D_2T_2)\\
		&= T_1^*D_1^2T_1+T_2^*D_2^2T_2+T_1^*D_1D_2T_2+T_2^*D_2D_1T_1\\
		&= I_{Y}+ I_{Z}+T_1^*D_1D_2T_2+T_2^*D_2D_1T_1\\
		&=I+T_1^*D_1D_2T_2+T_2^*D_2D_1T_1.
	\end{align*}
	So $T_1^*D_1D_2T_2+T_2^*D_2D_1T_1=0$. Since $T_1=0$  on $Z$ and $T_2=0$ on $Y$, we have that  $T_1^*D_1D_2T_2+T_2^*D_2D_1T_1 $=0 if and only if $T_1^*D_1D_2T_2=T_2^*D_2D_1T_1 $=0.
	\vskip12pt
	$(2)\Rightarrow (1)$: Given (2) it is immediate that $S=I$.	 
\end{proof}

\begin{corollary}  A frame  $\F=\{x_i\}_{i=1}^m\subset \R^n$ is  piecewise   scalable with an orthogonal  projection $P$ and scaling constants $\{a_i, b_i\}_{i=1}^m$  if and only if $\{a_iPx_i\}_{i=1}^m$ and $\{b_i(I-P)x_i\}_{i=1}^m$ are Parseval frames in  the range of $P$ and $I-P$, respectively, and 
 the rows of the matrix   $ [a_iPx_i]_{i=1}^m $ are orthogonal  to the rows of  $[b_i(I-P)x_i ]_{i=1}^m$ .

\end{corollary}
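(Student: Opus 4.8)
The plan is to read the statement off directly from Theorem \ref{T2}, after translating each of its three operator identities into the frame-theoretic and matrix language of the corollary. The key observation is that the synthesis operator of the family $\{a_iPx_i\}_{i=1}^m$ is exactly $T_1^*D_1$, which is the $n\times m$ matrix $[a_iPx_i]_{i=1}^m$ whose columns are the vectors $a_iPx_i$; likewise $T_2^*D_2=[b_i(I-P)x_i]_{i=1}^m$ is the synthesis operator of $\{b_i(I-P)x_i\}_{i=1}^m$. I would verify this by evaluating on the canonical basis of $\ell_2(m)$: $T_1^*D_1 e_i = a_i T_1^* e_i = a_iPx_i$, and similarly for $T_2^*D_2$.

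First I would match the two ``Parseval'' conditions. The analysis operator of $\{a_iPx_i\}_{i=1}^m$ is $(T_1^*D_1)^*=D_1T_1$, so its frame operator is $(T_1^*D_1)(D_1T_1)=T_1^*D_1^2T_1$. Hence $\{a_iPx_i\}_{i=1}^m$ is a Parseval frame for $Y=P(\R^n)$ precisely when $T_1^*D_1^2T_1=I_{Y}$, which is the first equation of Theorem \ref{T2}(2). The identical argument shows that $\{b_i(I-P)x_i\}_{i=1}^m$ is Parseval for $Z=(I-P)(\R^n)$ if and only if $T_2^*D_2^2T_2=I_{Z}$.

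Next I would match the orthogonality condition. Writing $A=[a_iPx_i]_{i=1}^m=T_1^*D_1$ and $B=[b_i(I-P)x_i]_{i=1}^m=T_2^*D_2$, the $(j,k)$ entry of the $n\times n$ matrix $AB^*$ equals the standard inner product of the $j$-th row of $A$ with the $k$-th row of $B$. Thus the assertion that the rows of $[a_iPx_i]_{i=1}^m$ are orthogonal to the rows of $[b_i(I-P)x_i]_{i=1}^m$ is equivalent to $AB^*=0$. Since $D_2$ is self-adjoint, $AB^*=(T_1^*D_1)(T_2^*D_2)^*=T_1^*D_1D_2T_2$, so the row-orthogonality is exactly the condition $T_1^*D_1D_2T_2=0$ of Theorem \ref{T2}(2); its adjoint $T_2^*D_2D_1T_1=0$ is automatically equivalent, so nothing is lost by recording only one of them.

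Finally, combining these three equivalences with the equivalence $(1)\Leftrightarrow(2)$ of Theorem \ref{T2} yields the corollary in both directions at once. I do not expect a genuine obstacle, since the argument is essentially a dictionary between the operator identities of Theorem \ref{T2} and their frame/matrix meanings; the only points requiring care are tracking the shapes of the $n\times m$ matrices and confirming that it is $AB^*$, rather than $A^*B$, that encodes row orthogonality, together with using the self-adjointness of the real diagonal operators $D_1,D_2$ to collapse $AB^*$ to $T_1^*D_1D_2T_2$.
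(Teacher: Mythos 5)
Your proof is correct. It reaches the corollary by a slightly different route than the paper: the paper reduces the statement to Theorem \ref{T-piecewise-1}, so after identifying the row-orthogonality condition with the operator identity $T_1^*D_1D_2T_2=T_2^*D_2D_1T_1=0$ (a step it asserts without the computation you supply), it still must show that this identity is equivalent to the vanishing of $\sum_{i}a_ib_i\langle x,Px_i\rangle\langle x,(I-P)x_i\rangle$ for all $x$, which it does by noting that the quadratic form of the symmetric operator $T_1^*D_1D_2T_2+T_2^*D_2D_1T_1$ vanishes identically, hence the operator is zero, and then splitting the sum into its two summands via their ranges. You instead go straight to Theorem \ref{T2}, whose condition (2) is already in operator form, so your whole task is the dictionary: $T_1^*D_1$ and $T_2^*D_2$ are the synthesis operators, i.e.\ the matrices $[a_iPx_i]_{i=1}^m$ and $[b_i(I-P)x_i]_{i=1}^m$; the Parseval conditions become $T_1^*D_1^2T_1=I_Y$ and $T_2^*D_2^2T_2=I_Z$; and $AB^*=T_1^*D_1D_2T_2$ encodes row orthogonality, with $T_2^*D_2D_1T_1=0$ following by taking adjoints. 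Your version is marginally shorter and makes explicit the one matrix computation the paper leaves implicit, while the paper's version has the minor advantage of tying the corollary directly to the quadratic-form condition (2)(b) of Theorem \ref{T-piecewise-1}, which is the formulation used elsewhere in the text. Either way the logical content is the same and both arguments are sound.
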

 
\begin{proof}	
	 By Theorem \ref{T-piecewise-1},  it is enough to show that  the conditions (1) and (2)  below are equivalent:
	 	
\begin{enumerate} \item The rows of the matrix   $ [a_iPx_i]_{i=1}^m $ are orthogonal  to the rows of  $[b_i(I-P)x_i ]_{i=1}^m$. 
	 	
	 	\medskip
	\item $\sum_{i=1}^m a_ib_i\langle x,Px_i\rangle \langle x,(I-P)x_i\rangle = 0. $
	 	
	 	\end{enumerate} 
	
	Let $T_1, T_2, D_1, D_2$ be operators defined as in Theorem \ref{T2}. Then (1) is equivalent to $T_1^*D_1D_2T_2=T_2^*D_2D_1T_1=0$. We now show that the latter is equivalent to (2). 
	
	Indeed, for any $x\in \RR^n$, we have that \[\sum_{i=1}^m a_ib_i\langle x,Px_i\rangle \langle x,(I-P)x_i\rangle = \langle T_1^*D_1D_2T_2x, x\rangle =\langle T^*_2D_2D_1T_1x, x\rangle. \]
	Hence, if $T_1^*D_1D_2T_2=T_2^*D_2D_1T_1=0$ then obviously, we have (1).

	 Conversely, suppose we have (1); then for all $x\in \RR^n$, 
	\[ \langle T_1^*D_1D_2T_2x, x\rangle =\langle T^*_2D_2D_1T_1x, x\rangle=0. \] This implies 
	\[\langle (T_1^*D_1D_2T_2+T_2^*D_2D_1T_1)x, x\rangle =0,\] for all $x\in \RR^n$. Since $T_1^*D_1D_2T_2+T_2^*D_2D_1T_1$ is symmetric, it follows that it is zero,  
		and so $T_1^*D_1D_2T_2=T_2^*D_2D_1T_1=0$. The proof is complete.
\end{proof}


\section{Standard piecewise scalable frames}
Let $\{e_i\}_{i=1}^n$ be the canonical orthonormal basis for $\R^n$. For any subset $J\subset [n]$, we call the orthogonal projection onto $\spn\{e_i\}_{i\in J}$, which is denoted by $\Pi_J$, the canonical projection of $\R^n$. For example, for any $1\leq k\leq n$, 
$ \Pi_{[k]}x= (x(1),\ldots, x(k), 0,\ldots,0) $ is the canonical projection onto $\spn\{ e_1,\ldots, e_k\}$. 

A special case of Definition \ref{D1} is the following.

\begin{definition}\label{D2}  Given a  frame $\F=\{x_i\}_{i=1}^m$ for $\RR^n$, 
we say that $\F$ is a standard piecewise scalable frame if there exists a canonical projection $\Pi_J$ of $\R^n$ such that $\F$ is piecewise scalable with 
 $\Pi_J$.
\end{definition}

   Let $\Pi_J$ be a canonical projection of rank $k\ge 1$.  After perhaps  a change of coordinates, we can assume that $J=[k]$. For $x_i\in \mathbb{R}^n$ we let 
  \[y_i=\Pi_{[k]}x_i=(x_i(1), x_i(2), \ldots, x_i(k), 0, \ldots, 0)\] and \[z_i=(I-\Pi_{[k]})x=(0, \ldots, 0, x_i(k+1), \ldots, x_i(n)).\]
  
  Thus, by our definition, a frame $\F=\{x_i\}_{i=1}^m$ for $\RR^n$ is piecewise scalable with the projection $\Pi_{[k]}$ if there exist scalars $\{a_i, b_i\}_{i=1}^m$ so that $\{a_iy_i+b_iz_i\}_{i=1}^m$ is a Parseval frame for $\RR^n$. We refomulate Theorem \ref{T-piecewise-1} for this case.
  
  \begin{theorem}\label{thm canonical proj} 
  	Let $\F=\{x_i\}_{i=1}^m$ be a frame for $\RR^n$. For a fixed $0<k<n$, let $y_i=\Pi_{[k]}(x_i)$ and $z_i=(I-\Pi_{[k]})x_i, i\in [m]$. The following are equivalent:
  	\begin{enumerate}
  		\item  There exist scalars $\{a_i, b_i\}_{i=1}^m$ for which 
  		$\{a_iy_i+b_iz_i\}_{i=1}^m$ is a Parseval frame for $\RR^n$.
  		\item There are scalars $\{a_i\}_{i=1}^m$ and $\{b_i\}_{i=1}^m$ satisfying the following two conditions:
  		\begin{enumerate}
  			\item $\{a_iy_i\}_{i=1}^m$ and $\{b_iz_i\}_{i=1}^m$ are both Parseval frames for their span.
  			\item For every $1\le j \le k$ and every $k+1\le \ell \le m$ we have
  			$\displaystyle \sum_{i=1}^m a_iy_i(j)b_iz_i(\ell)=0.$
  		Equivalently,  the $j^{th}$-row of the matrix $[a_iy_i+b_iz_i]_{i=1}^m$   is orthogonal to the $\ell^{th}$-row for all
  			$1\le j \le k $ and $ k+1\le \ell \le n$.
  		\end{enumerate}
  		\item There are scalars $\{a_i\}_{i=1}^m$ and $\{b_i\}_{i=1}^m$ satisfying the following two conditions:
  		\begin{enumerate}
  			\item $\{a_iy_i\}_{i=1}^m$ and $\{b_iz_i\}_{i=1}^m$ are both Parseval frames for their span.
  			\item We have
  		$\displaystyle \sum_{i=1}^ma_ib_i\langle x,y_i\rangle \langle x,z_i\rangle =0\mbox{ for all }x\in\RR^n.$
  		\end{enumerate}
  	\end{enumerate}
  	
  \end{theorem}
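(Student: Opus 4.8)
The plan is to derive Theorem~\ref{thm canonical proj} from Theorem~\ref{T-piecewise-1} by specializing to $P=\Pi_{[k]}$, and then to translate the abstract bilinear condition into the coordinatewise form that makes statements (2) and (3) differ only cosmetically. First I would record the identifications that come from choosing $P=\Pi_{[k]}$: here $Px_i=y_i$, $(I-P)x_i=z_i$, the range of $P$ is $\spn\{e_1,\dots,e_k\}=\spn\{y_i\}_{i=1}^m$, and the range of $I-P$ is $\spn\{e_{k+1},\dots,e_n\}=\spn\{z_i\}_{i=1}^m$ (the last two equalities hold because $\{x_i\}$ spans $\RR^n$, so its projections span the respective coordinate subspaces). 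With these identifications, condition (2)(a) of Theorem~\ref{T-piecewise-1} is exactly (3)(a), and condition (2)(b) is exactly (3)(b). Hence the equivalence (1)$\Leftrightarrow$(3) is immediate from Theorem~\ref{T-piecewise-1}.

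It remains to prove (2)$\Leftrightarrow$(3). Since conditions (2)(a) and (3)(a) are identical, I only need to show that, assuming (a), the coordinate condition (2)(b) is equivalent to the bilinear condition (3)(b). The key observation is that $y_i$ and $z_i$ are supported on disjoint blocks of coordinates, so for every $x\in\RR^n$,
\[
\langle x,y_i\rangle=\sum_{j=1}^{k}x(j)\,x_i(j),\qquad \langle x,z_i\rangle=\sum_{\ell=k+1}^{n}x(\ell)\,x_i(\ell).
\]
Substituting these into the sum in (3)(b) and interchanging the order of summation, I would rewrite
\[
\sum_{i=1}^{m}a_ib_i\langle x,y_i\rangle\langle x,z_i\rangle=\sum_{j=1}^{k}\sum_{\ell=k+1}^{n}\Big(\sum_{i=1}^{m}a_ib_i\,y_i(j)\,z_i(\ell)\Big)\,x(j)x(\ell).
\]
This exhibits the left-hand side as a quadratic form in $x$ whose only nonzero monomials are the products $x(j)x(\ell)$ with $1\le j\le k<\ell\le n$.

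To finish, I would argue the two directions of (2)(b)$\Leftrightarrow$(3)(b). If all coefficients $\sum_i a_ib_i y_i(j)z_i(\ell)$ vanish, then the displayed sum is identically zero, giving (3)(b). Conversely, if the sum vanishes for all $x$, then evaluating at $x=e_j+e_\ell$ (for fixed $j\le k<\ell$) isolates the single monomial and forces $\sum_i a_ib_i y_i(j)z_i(\ell)=0$; since $j,\ell$ were arbitrary this is (2)(b). Finally, the ``equivalently'' clause in (2)(b) is a direct rewriting: in the matrix $[a_iy_i+b_iz_i]_{i=1}^m$ the $j$-th row ($j\le k$) equals $(a_iy_i(j))_{i=1}^m$ and the $\ell$-th row ($\ell>k$) equals $(b_iz_i(\ell))_{i=1}^m$, because $z_i(j)=0$ for $j\le k$ and $y_i(\ell)=0$ for $\ell>k$; their Euclidean inner product in $\RR^m$ is precisely $\sum_i a_ib_i y_i(j)z_i(\ell)$, so row-orthogonality is the same as the vanishing of that coefficient.

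I do not anticipate a genuine obstacle, since the statement is essentially a reformulation of Theorem~\ref{T-piecewise-1}; the only points requiring care are the bookkeeping of supports in the expansion above and the linear-independence step (the evaluation at $x=e_j+e_\ell$), which guarantees that the identical vanishing of the quadratic form is equivalent to the vanishing of each individual coefficient. I would also take care, when applying Theorem~\ref{T-piecewise-1}, to read ``Parseval frame for their span'' as a Parseval frame for the full coordinate subspace $\spn\{y_i\}$ (respectively $\spn\{z_i\}$), which is exactly the range of $\Pi_{[k]}$ (respectively $I-\Pi_{[k]}$).
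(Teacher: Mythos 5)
Your proposal is correct and matches the paper's intent exactly: the paper offers no separate proof, presenting the theorem as a reformulation of Theorem~\ref{T-piecewise-1} for $P=\Pi_{[k]}$, which is precisely your derivation of (1)$\Leftrightarrow$(3), and your coordinate expansion plus evaluation at $x=e_j+e_\ell$ correctly supplies the (2)$\Leftrightarrow$(3) bookkeeping. Your care in reading ``Parseval frame for their span'' as referring to the full coordinate subspace is also the right reading (and note the statement's ``$k+1\le\ell\le m$'' is a typo for ``$k+1\le\ell\le n$'').
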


\begin{remark} Condition (2)(b) in Theorem \ref{thm canonical proj} is necessary. The following example shows this.
\end{remark}

\begin{ex}\label{E3}
	Let $\F=\{x_i\}_{i=1}^4$ be a frame for $\RR^4$, where
	
	\[ x_1=(1,0,1,1), \ x_2= (0,1,1,-1),\ x_3=e_3 ,\ x_4=e_1 .\]
	Let $k=2$. Then
	\[ y_1=(1,0,0,0),\ y_2=(0,1,0,0),\ y_3=(0,0,0,0),\ y_4=(1,0,0,0),\]
	and
	\[ z_1=(0,0,1,1),\ z_2=(0,0,1,-1), \ z_3=(0,0,1,0),\ z_4=(0,0,0,0).\] So if we choose $a_1=a_2=1$, $a_3=a_4=0$, and $b_1=b_2=\frac{1}{\sqrt{2}}$, $b_3=b_4=0$ then $\{a_ix_i\}_{i=1}^4=
	\{y_1, y_2\}$ is a Parseval frame for $ \spn\{e_1,e_2\}$ and $\{b_iz_i\}_{i=1}^4=\{{\frac{1}{\sqrt{2}}}z_1,{\frac{1}{\sqrt{2}}}z_2\}$ is a Parseval frame for
	$ \spn\{e_3,e_4\}$. But we can see that the set of vectors, $\{a_iy_i+b_iz_i\}_{i=1}^4$, i.e., the set of the   columns of the matrix
	\[A=\begin{pmatrix}
		a_1&0&0&a_4\\
		0&a_2&0&0\\
		b_1&b_2&b_3&0\\
		b_1&-b_2&0&0
	\end{pmatrix}\] is not a Parseval frame for $\RR^4$ for any choices of $\{a_i, b_i\}_{i=1}^4$. Note that if the $a_i$, $b_i$ are chosen as above,    the columns of the matrix $A$  do not even form a frame.
\end{ex}

We note that a frame can be piecewise scalable with one but not all canonical orthogonal projections. Also, a frame may not be piecewise scalable with any canonical orthogonal projections.

In the following example, we will use the fact \cite{KO} that in $\mathbb{R}^2$, a spanning set of vectors which lie in an open quadrant is not scalable.
\begin{ex}\label{E4}
	Let $\F=\{x_i\}_{i=1}^m$ be a frame for $\RR^3$, satisfying
\[ x_i(j)>0,\mbox{ for all }i\in [m] \mbox{ and } j\in [n].\]	
Then any canonical projection $\Pi_J$, $\{\Pi_Jx_i\}_{i=1}^m$ and $\{(I-\Pi_J)x_i\}_{i=1}^m$ both lie in a quadrant so at least one is not scalable.

\end{ex}

If a frame $\F$ is scalable, then every frame obtained from $\F$ through a unitary transformation   is scalable as well \cite{KO}. 
The next theorem  shows that also the   property of  being piecewise scalable is  preserved  by unitary transformations.  We prove first the following.
\begin{lemma}\label{lem1}
	Let $\F=\{x_i\}_{i=1}^m$ be a  piecewise scalable frame for $\RR^n$ with an orthogonal projection $P$. Let  $Q:\RR^n\to
	\RR^n$ be an orthogonal projection and let $U$ be a unitary transformation such that $UP= QU$. Then $U(\F)$ is  piecewise scalable with projection $Q$  and with the same scaling constants.
\end{lemma}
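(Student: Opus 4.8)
The plan is to show directly that the candidate scaled frame for $U(\F)$ is nothing but the image under $U$ of the scaled frame that already witnesses the piecewise scalability of $\F$, and then to invoke the elementary fact that a unitary carries a Parseval frame to a Parseval frame. Since the construction is explicit (same projection $Q$, same constants), the entire argument reduces to one algebraic identity plus a standard computation.

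First I would record the consequences of the intertwining hypothesis $UP = QU$. Right-multiplying by $U^* = U^{-1}$ gives $Q = UPU^*$, which in particular re-confirms that $Q$ is an orthogonal projection. Subtracting $UP = QU$ from the trivial identity $U = U$ yields $U(I-P) = (I-Q)U$. These two relations express that $U$ maps the range and kernel of $P$ onto the range and kernel of $Q$, respectively, and they are exactly what is needed to push the scaling through $U$.

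Next, using that $\F$ is $P$-piecewise scalable, I fix scalars $\{a_i,b_i\}_{i=1}^m$ for which $\{a_iPx_i+b_i(I-P)x_i\}_{i=1}^m$ is Parseval. Applying the two identities above, I compute for each $i$
\[
a_iQ(Ux_i)+b_i(I-Q)(Ux_i)=a_iUPx_i+b_iU(I-P)x_i=U\bigl(a_iPx_i+b_i(I-P)x_i\bigr).
\]
Thus the proposed scaled frame for $U(\F)$ with projection $Q$ and constants $\{a_i,b_i\}$ is precisely the image under $U$ of a known Parseval frame.

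Finally I would close with the unitary-invariance of Parseval frames: for every $y\in\RR^n$ and any Parseval frame $\{f_i\}_{i=1}^m$, one has $\sum_i|\ip{y}{Uf_i}|^2=\sum_i|\ip{U^*y}{f_i}|^2=\|U^*y\|^2=\|y\|^2$. Taking $f_i=a_iPx_i+b_i(I-P)x_i$ shows that $\{a_iQ(Ux_i)+b_i(I-Q)(Ux_i)\}_{i=1}^m$ is Parseval, i.e.\ $U(\F)$ is $Q$-piecewise scalable with the same constants. There is no serious obstacle here; the only point demanding care is deriving $U(I-P)=(I-Q)U$ correctly from the one-sided relation $UP=QU$, after which everything is a direct substitution.
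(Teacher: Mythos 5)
Your proposal is correct and follows essentially the same route as the paper: both use $QU=UP$ (hence $(I-Q)U=U(I-P)$) to identify the candidate frame as $\{U(a_iPx_i+b_i(I-P)x_i)\}_{i=1}^m$, and then conclude via the computation $\sum_i|\ip{x}{Uf_i}|^2=\|U^*x\|^2=\|x\|^2$. The only difference is organizational: you isolate the unitary-invariance of Parseval frames as a separate step, while the paper folds it into a single chain of equalities.
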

\begin{proof} By assumption,  there exist constants $\{a_i, b_i\}_{i=1}^m$ so that $\{a_iPx_i+b_i(I-P)x_i\}_{i=1}^m$ is a Parseval frame. We show that 
	 $\{a_iQUx_i+b_i(I-Q)Ux_i\}_{i=1}^m$ is a Parseval frame as well. For every $x\in \RR^n$,  
	\begin{align*}
		\sum_{i=1}^m|\langle x,\ a_iQUx_i  +b_i(I-Q)Ux_i)\rangle|^2  &=\sum_{i=1}^m|\langle x,\ a_iUPx_i + b_iU(I-P)x_i\rangle|^2\\
		&=\sum_{i=1}^m|\langle U^*x,\ a_iPx_i + b_i(I-P)x_i\rangle|^2\\
		&=\|U^*x\|^2 
		 =\|x\|^2.
	\end{align*}
	Thus, $\{a_iQUx_i+b_i(I-Q)Ux_i\}_{i=1}^m$ is a Parseval frame, as required.
\end{proof}

\begin{theorem}\label{T1} Let  $\F=\{x_i\}_{i=1}^m$ be a piecewise scalable frame for $\R^n$  with an orthogonal projection $P$ of  rank $k$. Then the following hold:
	\begin{enumerate}
		\item For every unitary transformation $U:\RR^n\to\RR^n$, the frame  $U(\F)$ is   piecewise scalable. 
                  \item For every orthogonal projection $Q$ of rank $k$, there exists a unitary transformation $U$ such that $U(\F)$ is $Q$-piecewise scalable. 
	\end{enumerate}
\end{theorem}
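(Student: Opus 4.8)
The plan is to reduce both statements to Lemma \ref{lem1}, which guarantees that whenever $U$ is unitary and $UP = QU$ holds for an orthogonal projection $Q$, the frame $U(\F)$ is automatically $Q$-piecewise scalable (with the same constants). Thus neither part requires touching the scaling constants or recomputing Parseval bounds directly; the entire content is to exhibit, in each case, a projection $Q$ and a unitary $U$ satisfying the intertwining relation $UP = QU$. I would state this reduction explicitly at the outset.

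For part (1) I would take $U$ to be an arbitrary unitary and set $Q := U P U^*$. First I would check that $Q$ is an orthogonal projection: $Q^2 = UP U^* U P U^* = U P^2 U^* = U P U^* = Q$ using $U^* U = I$ and $P^2 = P$, and $Q^* = U P^* U^* = U P U^* = Q$ using $P^* = P$. Then the intertwining relation is immediate, since $Q U = U P U^* U = U P$. Applying Lemma \ref{lem1} with this $Q$ shows $U(\F)$ is $Q$-piecewise scalable, hence in particular piecewise scalable. This part is essentially formal.

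For part (2) the projection $Q$ of rank $k$ is prescribed, so I must instead build a unitary $U$ with $U P = Q U$. Here I would use that $P$ and $Q$ have the same rank $k$: the ranges $P(\RR^n)$ and $Q(\RR^n)$ are $k$-dimensional, while $(I-P)(\RR^n)$ and $(I-Q)(\RR^n)$ are $(n-k)$-dimensional. I would choose an orthonormal basis $\{u_1,\dots,u_k\}$ of $P(\RR^n)$ together with an orthonormal basis $\{u_{k+1},\dots,u_n\}$ of $(I-P)(\RR^n)$; because $P$ is an orthogonal projection these are mutually orthogonal, so $\{u_1,\dots,u_n\}$ is an orthonormal basis of $\RR^n$. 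Similarly I would pick $\{v_1,\dots,v_k\}$ spanning $Q(\RR^n)$ and $\{v_{k+1},\dots,v_n\}$ spanning $(I-Q)(\RR^n)$, forming an orthonormal basis. Defining $U$ by $U u_i = v_i$ makes $U$ unitary, and one checks $U P = Q U$ by testing on the two subspaces: on $P(\RR^n)$ both sides send $u_i$ to $v_i \in Q(\RR^n)$, and on $(I-P)(\RR^n)$ both sides vanish. Lemma \ref{lem1} then gives the claim.

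The only genuine step is the construction in part (2), and the key point that makes it work is the hypothesis that $Q$ has the \emph{same} rank $k$ as $P$; without matching dimensions of the two pairs of complementary subspaces there would be no unitary intertwining $P$ and $Q$, so this is where the rank assumption is used. Everything else is routine verification, and I would keep those computations brief.
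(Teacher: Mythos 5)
Your proposal is correct and follows essentially the same route as the paper: part (1) via $Q = UPU^*$ and the intertwining lemma, part (2) by matching orthonormal bases of the ranges of $P$ and $Q$ (and their complements) to build a unitary with $UP = QU$, then invoking Lemma \ref{lem1}. Your explicit check that $UPU^*$ is an orthogonal projection is a small but welcome addition the paper leaves implicit.
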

\begin{proof}
	(1): Let $Q= UPU^*$. Since $QU=UP$,   Lemma \ref{lem1}  yields that $U(\F)$ is piecewise scalable with projection $Q$.

(2): Let $\{u_i\}_{i=1}^k$ and $\{v_i\}_{i=1}^k$ be orthonormal bases of $P(\R^n)$ and $Q(\R^n)$, respectively. We complete these sets to   orthonormal bases of $\R^n$, that we denote with  $\{u_i\}_{i=1}^n$ and $\{v_i\}_{i=1}^n$.  We define a linear map $U :\RR^n\to\RR^n$  by $Uu_i = v_i, i\in [n]$ 
	and we extend it by linearity to the whole $\RR^n$.
The operator $U$ is unitary because it maps an orthonormal basis to an orthonormal basis.  
We  show that $UP=QU$ and hence (2) follows by Lemma \ref{lem1}. 

It is enough to show that $(UP)u_i=(QU)u_i$ for every $i\in [n]$. 
Indeed, when $i\leq k$, $(UP)u_i= Uu_i=v_i=Qv_i=(QU)u_i$  and $(UP)u_i=0=Qv_i=(QU)u_i$ when $i>k$. The proof is complete.
	\end{proof}

 \begin{corollary}\label{cor1} A frame $\F$ for $\RR^n$ is piecewise scalable if and only if there exists a unitary transformation $U:\R^n\to\R^n$ such that $U(\F)$ is $\Pi_{[k]}$-piecewise scalable for some $k\leq n$.
 \end{corollary}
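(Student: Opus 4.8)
The plan is to read this corollary off directly from Theorem~\ref{T1}, whose two parts together already contain all the content; the statement merely specializes the target projection to a canonical one. The key observation is that for each $k$ the canonical projection $\Pi_{[k]}$ is nothing more than one concrete orthogonal projection of rank $k$, so the ``for every orthogonal projection $Q$ of rank $k$'' clause in Theorem~\ref{T1}(2) applies to it.

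For the forward direction I would start from the assumption that $\F$ is piecewise scalable, fix an orthogonal projection $P$ witnessing this (as in Definition~\ref{D1}), and set $k=\rank(P)$, so $0\le k\le n$. Since $\Pi_{[k]}$ is itself an orthogonal projection of rank $k$, I would invoke Theorem~\ref{T1}(2) with the choice $Q=\Pi_{[k]}$ to produce a unitary transformation $U:\R^n\to\R^n$ for which $U(\F)$ is $\Pi_{[k]}$-piecewise scalable. This is exactly the desired conclusion.

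For the converse I would assume there is a unitary $U$ with $U(\F)$ being $\Pi_{[k]}$-piecewise scalable for some $k\le n$; in particular $U(\F)$ is piecewise scalable. I would then apply Theorem~\ref{T1}(1) to the frame $U(\F)$ together with the unitary $U^{*}$, which shows that $U^{*}\bigl(U(\F)\bigr)=\F$ is piecewise scalable. Here I am using only that the adjoint of a unitary is unitary and that Theorem~\ref{T1}(1) applies to any piecewise scalable frame.

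I expect no genuine obstacle, since the substantive work—constructing a unitary that intertwines $P$ with a projection of equal rank, and verifying that piecewise scalability is transported along unitaries with unchanged scaling constants—is already carried out in Lemma~\ref{lem1} and Theorem~\ref{T1}. The only point to keep an eye on is the degenerate ranks $k=0$ and $k=n$, where $\Pi_{[k]}$ is trivial and piecewise scalability collapses to ordinary scalability; this is harmless, because Definition~\ref{D1} permits trivial projections and Theorem~\ref{T1}(2) is stated for projections of every rank $0\le k\le n$.
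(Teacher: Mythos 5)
Your proposal is correct and follows essentially the same route as the paper: the forward direction applies Theorem~\ref{T1}(2) with $Q=\Pi_{[k]}$ where $k=\rank P$, and the converse applies Theorem~\ref{T1}(1) with the unitary $U^{-1}=U^{*}$ to recover $\F$ from $U(\F)$. Your extra remark on the degenerate ranks $k=0$ and $k=n$ is harmless and consistent with Definition~\ref{D1}.
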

 \begin{proof} If $\F$ is piecewise scalable with a projection $P$  of rank $k$, by Part 2 of Theorem \ref{T1}  we can choose a unitary transformation $U$ so that  $U(\F)$ is piecewise scalable with projection $\Pi_{[k]}$.

 	Now suppose that there exists a unitary transformation $U$ such that $U(\F)$ is $\Pi_{[k]}$-piecewise scalable. By Part 1 of Theorem \ref{T1}, $U^{-1}U(\mathcal{X})=\mathcal{X}$ is piecewise scalable. 
 \end{proof}

  \section{Piecewise scalability of frames in low dimensional spaces}

In this section we prove that all frames in  $\R^3$ are piecewise scalable. In $\RR^4$, we show that   frames  whose vectors are close to each other (in  the sense of Corollary \ref{R4}) are not piecewise scalable with any projection of rank 2. We then generalize this result in higher dimensional spaces.

 \begin{theorem}\label{n=3}
 	Every frame for $\R^3$ is piecewise scalable.
 \end{theorem}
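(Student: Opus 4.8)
The plan is to reduce the statement to Corollary \ref{C3} and then to solve one quadratic equation on the unit sphere. First I would use that a frame for $\RR^3$ spans $\RR^3$, so it contains three linearly independent vectors; after relabeling, call them $x_1, x_2, x_3$. By Corollary \ref{C3} (applied with $n=3$, $I=\{1,2\}$ and $j=3$), it suffices to produce a rank-$2$ orthogonal projection $P$ for which $\{Px_1, Px_2\}$ is an orthogonal pair of nonzero vectors and $(I-P)x_3\neq 0$; the remaining frame vectors are irrelevant, since their scaling constants may be taken to be $0$. Thus all the work lies in choosing one good projection $P$.

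Next I would parametrize the candidate projections by a unit vector $w$, taking $P=P_w$ to be the orthogonal projection onto the plane $w^{\perp}=\{x:\l x,w\r=0\}$, so that $I-P_w$ is the rank-$1$ projection onto $\RR w$. Since $P_w x_i = x_i - \l x_i,w\r w$, a direct expansion gives $\l P_w x_1, P_w x_2\r = \l x_1,x_2\r - \l x_1,w\r\l x_2,w\r$. Hence the three requirements on $P$ become: (i) $\l x_1,w\r\l x_2,w\r = \l x_1,x_2\r$; (ii) $w\neq\pm x_i/\norm{x_i}$ for $i=1,2$, which guarantees $P_wx_1,P_wx_2\neq0$; and (iii) $\l x_3,w\r\neq0$, which guarantees $(I-P_w)x_3\neq0$. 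Writing $f(w)=\l x_1,w\r\l x_2,w\r = w^{\top} M w$ with the symmetric matrix $M=\tfrac12(x_1x_2^{\top}+x_2x_1^{\top})$, condition (i) asks precisely that the quadratic form $f$ attain the value $c:=\l x_1,x_2\r$ on the unit sphere.

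The crux, and the main obstacle, is showing that such a $w$ exists while also meeting (ii) and (iii). I would compute the eigenvalues of $M$: it vanishes on $(\spn\{x_1,x_2\})^{\perp}$, while on $\spn\{x_1,x_2\}$ its eigenvalues are $\tfrac{c\pm\sqrt{ab}}{2}$, where $a=\norm{x_1}^2$ and $b=\norm{x_2}^2$. Because $x_1,x_2$ are linearly independent, Cauchy--Schwarz is strict, $|c|<\sqrt{ab}$, so the largest eigenvalue is positive, the smallest is negative, and $c$ lies strictly between them; hence $f$ attains the value $c$ on the sphere and (i) is solvable. To secure (ii) and (iii) as well, I would select an explicit solution $w_0$ whose three coordinates in the eigenbasis of $M$ are all nonzero (possible because $c$ is a strictly interior value), and then exploit that the level set $\{f=c\}\cap S^2$ is invariant under flipping the signs of those eigen-coordinates. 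The four excluded directions $\pm x_i/\norm{x_i}$ lie in the span of the two nonzero eigenvectors, hence have vanishing middle eigen-coordinate and are automatically avoided by $w_0$ and its sign-flips; and among the sign-flips of $w_0$ at least one satisfies $\l x_3,w\r\neq0$, since otherwise a short linear-algebra argument (varying the signs) forces every eigen-coordinate of $x_3$ to vanish, contradicting $x_3\neq 0$. This $w$ yields a projection $P$ meeting all three conditions, and Corollary \ref{C3} then finishes the proof.
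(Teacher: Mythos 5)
Your proof is correct, and it follows the same overall strategy as the paper: pick three linearly independent frame vectors, find a projection whose rank-one complement is spanned by a single well-chosen unit vector so that two of the projected vectors become orthogonal and nonzero while the third survives the complementary projection, and then invoke Corollary \ref{C3}. (The paper phrases this with $P$ of rank one and the orthogonal pair $(I-P)x_1,(I-P)x_2$, which is your setup with $P$ and $I-P$ swapped; the governing equation $\langle x_1,w\rangle\langle x_2,w\rangle=\langle x_1,x_2\rangle$ is identical to the paper's condition on $u/\|u\|$.) Where you genuinely diverge is in how the key vector is produced. The paper normalizes the frame, assumes $a=\langle x_1,x_2\rangle\ge 0$, and writes down the explicit solution $u=\pm\sqrt{a/(1-a^2)}\,(x_1+x_2)+z$ with $z\perp\operatorname{span}\{x_1,x_2\}$, verifying orthogonality by direct computation; you instead observe that the constraint asks the quadratic form of $M=\tfrac12(x_1x_2^{\top}+x_2x_1^{\top})$ to attain the value $c=\langle x_1,x_2\rangle$ on the sphere, and deduce solvability from the fact that strict Cauchy--Schwarz places $c$ strictly between the extreme eigenvalues $\tfrac12(c\pm\sqrt{ab})$. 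Your route costs a little more bookkeeping (choosing $w_0$ with all eigen-coordinates nonzero, and the sign-flip argument to secure $\langle x_3,w\rangle\neq 0$, both of which check out), but it buys an existence argument that explains \emph{why} a solution exists, avoids the unit-norm normalization and the sign convention on $a$, and handles the nondegeneracy conditions more systematically than the paper's case checks; the paper's version buys an explicit closed-form projection.
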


\begin{proof} It is enough to show that any frame with three vectors in $\R^3$ is piecewise scalable. Let   $\F=
 \{x_1, x_2, x_3\}$ be a unit-norm frame for $\RR^3$.

Without loss of generality we can assume that $a:=\langle x_1, x_2\rangle \geq0$. Note that $a<1$. Let $u=\lambda (x_1+x_2) +z, \mbox{ where } z\perp \spn\{x_1, x_2\}, \|z\|=1$, and $\lambda = \pm\sqrt{\frac{a}{1-a^2}}$. Let $P$ be the orthogonal projection onto $\spn\{u\}$. So
\[Px=\frac{1}{\|u\|^2}\langle x, u\rangle u, \mbox { for } x\in \RR^3.\]
We first show that $\langle (I-P)x_1, (I-P)x_2\rangle =0$. To see this, we compute:
\[\|u\|^2=2\lambda^2(1+a)+1\] and so
\begin{align*}
\langle (I-P)x_1, (I-P)x_2\rangle&=\langle x_1, x_2\rangle -\langle Px_1, Px_2\rangle\\
&=a - \frac{1}{\|u\|^2}\langle x_1, u\rangle \langle x_2, u\rangle\\
&=a-\frac{1}{\|u\|^2}\langle x_1,\lambda(x_1+x_2)\rangle \langle x_2,\lambda(x_1+x_2)\rangle \\
&=a-\frac{1}{\|u\|^2}\lambda^2(1+\langle x_1, x_2\rangle)^2\\
&=a-\dfrac{\frac{a}{1-a^2}(1+a)^2}{2\frac{a}{1-a^2}(1+a)+1}\\
&=a-\dfrac{a(1+a)}{1+a}=0.
\end{align*}

Now we check that $(I-P)x_1, (I-P)x_2$ are nonzero and $Px_3\not=0$. 

Indeed, if $(I-P)x_1= 0$ then 
\[x_1=\frac{\langle x_1, u\rangle}{\|u\|^2}u = \frac{\langle x_1, u\rangle}{\|u\|^2}[\lambda(x_1+x_2)+z].\] This implies that $z\in \spn\{x_1, x_2\}$, which is imposible by the choice of $z$. Similarly, we have that $(I-P)x_2\not=0$. 

To see $Px_3\not=0$, note that we have two choices of $\lambda$ to get the vector $u$. One of the choices must satisfy $Px_3\not=0$; otherwise, $x_3$ is orthogonal to  $\pm \sqrt{\frac{a}{1-a^2}}(x_1+x_2)+z$ and so $x_3\perp z$, a contradiction since $\F$ spans $\RR^3$. Thus, $\{(I-P)x_1, (I-P)x_2, Px_3\}$ is an orthogonal set of nonzero vectors. This implies that $\F$ is piecewise scalable with the projection $P$.
\end{proof}

 \medskip

We have proved   that every frame in $\R^3$ is piecewise scalable by choosing a linearly independent subset of the frame, say  $\{x_1, x_2, x_3\}$, and then constructing   an orthogonal projection $P$ so that $Px_1, Px_2$ are nonzero and orthogonal, and $(I-P)x_3\not=0$.

The same strategy would work  in  $\RR^4$ if, for any given linearly independent set $\{x_1, x_2, x_3, x_4\}$ in $\R^4$, one of the following  holds.
\begin{enumerate}
\item There is an orthogonal projection $P$ of rank 2 so that $Px_1, Px_2$ are nonzero and orthogonal, and $(I-P)x_3, (I-P)x_4$ are nonzero and orthogonal.
\item There is an orthogonal projection $P$ of rank 3 so that $Px_1, Px_2$ and $Px_3$ are nonzero and orthogonal, and $(I-P)x_4\not=0$.
\end{enumerate}
Let us analyze the first case. 
To construct the projection $P$, we need to find   orthonormal vectors $u$, $v \in\R^4$   such that   the orthogonal projection  onto $\spn\{u, v\}$, i.e.,  
\[Px=\langle x, u\rangle u+\langle x, v\rangle v , \qquad  x\in \RR^4, \]
 satisfies  $Px_1$, $Px_2\ne 0$ and $(I-P)x_3$, $ (I-P)x_4\ne 0$, and also 
$$Px_1 \perp Px_2 \mbox{ and }  (I-P)x_3 \perp (I-P)x_4.$$
The last  2 conditions are equivalent to 
\[\langle x_1, u\rangle\langle x_2, u\rangle+\langle x_1, v\rangle\langle x_2, v\rangle=0,\]
 and 
\[\langle x_3, u\rangle\langle x_4, u\rangle+\langle x_3, v\rangle\langle x_4, v\rangle=\langle x_3, x_4\rangle.\]

Thus, in order to find a desirable $P$, we need to  find vectors $u$, $v$ by solving a system of 5 equations with 8 variables. 
In principle such  a system   has  a great chance of having solutions, but unfortunately, this is not always the case.

\begin{lemma} \label{lem proj}
Let  $x,\, y\in \R^n$ with $\|x\|=\|y\|=1$ and $\epsilon>0$.  If $\|x-y\|<\epsilon$, then for any  orthogonal projection  $P:\RR^n\to \R^n$, we have that
\[ \max\{\langle Px,Py\rangle,\langle (I-P)x,(I-P)y\rangle\}\ge \frac{1}{2}-\epsilon.\]
\end{lemma}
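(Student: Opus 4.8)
The plan is to reduce the whole statement to a single algebraic identity relating $\langle x,y\rangle$ to the two quantities appearing in the maximum. First I would decompose each vector along the range and kernel of $P$, writing $x=Px+(I-P)x$ and $y=Py+(I-P)y$, and expand the inner product. The key observation is that the two cross terms vanish: since $P$ is an orthogonal projection we have $P^*=P$ and $P(I-P)=P-P^2=0$, so $\langle Px,(I-P)y\rangle=\langle x,P(I-P)y\rangle=0$, and likewise $\langle (I-P)x,Py\rangle=0$. This yields the identity
\[ \langle x,y\rangle=\langle Px,Py\rangle+\langle (I-P)x,(I-P)y\rangle, \]
valid for every orthogonal projection $P$ and all $x,y\in\RR^n$.

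Next I would invoke the elementary fact that the larger of two real numbers is at least their average, $\max\{a,b\}\ge\frac{a+b}{2}$. Taking $a=\langle Px,Py\rangle$ and $b=\langle(I-P)x,(I-P)y\rangle$ and using the identity above gives
\[ \max\{\langle Px,Py\rangle,\langle(I-P)x,(I-P)y\rangle\}\ge\tfrac12\langle x,y\rangle. \]
It then remains to bound $\langle x,y\rangle$ from below using the hypothesis $\|x-y\|<\epsilon$. Since $x$ and $y$ are unit vectors, $\|x-y\|^2=2-2\langle x,y\rangle$, so $\langle x,y\rangle=1-\tfrac12\|x-y\|^2>1-\tfrac{\epsilon^2}{2}$, whence the maximum exceeds $\tfrac12-\tfrac{\epsilon^2}{4}$.

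The only remaining point is to match $\tfrac12-\tfrac{\epsilon^2}{4}$ to the stated bound $\tfrac12-\epsilon$. For $0<\epsilon\le 4$ one has $\tfrac{\epsilon^2}{4}\le\epsilon$, so $\tfrac12-\tfrac{\epsilon^2}{4}\ge\tfrac12-\epsilon$ and we are done; for $\epsilon>4$ the right-hand side $\tfrac12-\epsilon$ is negative, while by Cauchy--Schwarz $\tfrac12\langle x,y\rangle\ge-\tfrac12$, so the inequality holds trivially (this regime is in any case vacuous-leaning, as unit vectors always satisfy $\|x-y\|\le 2$). I do not expect a genuine obstacle: the entire content is the cross-term cancellation in the first step, and the averaging inequality together with the one-line norm identity do the rest.
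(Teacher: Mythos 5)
Your proof is correct, and it takes a genuinely different route from the paper's. The paper argues by perturbation: it writes $\langle Px,Py\rangle=\|Px\|^2+\langle Px,P(y-x)\rangle\ge\|Px\|^2-\epsilon$ via Cauchy--Schwarz, does the same for $I-P$, and then uses $\|Px\|^2+\|(I-P)x\|^2=1$ to conclude that one of the two squared norms is at least $\tfrac12$; the linear error $\epsilon$ in the stated bound comes precisely from that Cauchy--Schwarz step. You instead use the exact identity $\langle x,y\rangle=\langle Px,Py\rangle+\langle(I-P)x,(I-P)y\rangle$ (a polarized form of the Pythagorean identity $\|x\|^2=\|Px\|^2+\|(I-P)x\|^2$ that the paper uses elsewhere), bound the maximum below by the average, and compute $\langle x,y\rangle=1-\tfrac12\|x-y\|^2$ exactly. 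All steps check out, including the cross-term cancellation and the case split at $\epsilon=4$. Your route buys a strictly stronger conclusion, namely $\max\{\langle Px,Py\rangle,\langle(I-P)x,(I-P)y\rangle\}>\tfrac12-\tfrac{\epsilon^2}{4}$, which dominates $\tfrac12-\epsilon$ for $\epsilon\le 4$ and is trivial otherwise. What the paper's version buys is the intermediate estimate $\langle Px,Py\rangle\ge\|Px\|^2-\epsilon$ in isolation, which is the form of perturbation bound reused in the proof of Theorem \ref{thm proj}; your identity-based argument is sharper and self-contained but does not directly produce that intermediate inequality.
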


\begin{proof}
Compute:
\[ \langle Px,Py\rangle=\langle Px,Px\rangle+\langle Px,P(y-x)\rangle\ge \|Px\|^2-\epsilon.
\]
Similarly,
\[ \langle (I-P)x,(I-P)y\rangle \ge \|(I-P)x\|^2-\epsilon.\]
But
\[ 1=\|Px\|^2+\|(I-P)x\|^2\mbox{ so } \max\{\|Px\|^2,\|(I-P)x\|^2\}\ge \frac{1}{2}.\]
These three inequalities prove the lemma.
\end{proof}

\begin{theorem}\label{thm proj}
Let $0<  \epsilon<1 $ and let $\{x_i\}_{i=1}^m$ be a unit-norm frame for $\RR^n$ such that $\|x_i-x_j\|< \epsilon$   for all $i\not= j$. Let $P$ be an
orthogonal projection on $\RR^n$. One of the following must hold:
\begin{enumerate}
\item $\langle Px_i,Px_j\rangle \ge \dfrac{1}{2}-4\epsilon, \mbox{ for all }i\not= j.$
\item $\langle (I-P)x_i,(I-P)x_j\rangle \ge \dfrac{1}{2}-\epsilon, \mbox{ for all }i\not= j.$
\end{enumerate}
\end{theorem}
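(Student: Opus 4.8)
The plan is to establish the dichotomy as a single implication: I will show that if alternative (2) fails, then alternative (1) must hold. Since Lemma \ref{lem proj} already guarantees that for each \emph{individual} pair at least one of the two inner products is at least $\frac12-\epsilon$, the real content is that a single pair where the $(I-P)$-inner product is small forces \emph{all} pairs to have large $P$-inner products. So the whole argument is a perturbation estimate driven by the hypothesis that the $x_i$ are close together.

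First I would dispose of the trivial case: if $\langle (I-P)x_i,(I-P)x_j\rangle \ge \frac12-\epsilon$ for every $i\ne j$, then alternative (2) holds and there is nothing to prove. Hence I may assume there is a pair $(i_0,j_0)$, $i_0\ne j_0$, with $\langle (I-P)x_{i_0},(I-P)x_{j_0}\rangle < \frac12-\epsilon$. Applying Lemma \ref{lem proj} to the unit vectors $x_{i_0},x_{j_0}$ (which satisfy $\|x_{i_0}-x_{j_0}\|<\epsilon$), the maximum in that lemma must be attained by the $P$-term, giving $\langle Px_{i_0},Px_{j_0}\rangle \ge \frac12-\epsilon$.

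The key mechanism is that orthogonal projections are contractions: for every index $i$ one has $\|Px_i-Px_{i_0}\|=\|P(x_i-x_{i_0})\|\le \|x_i-x_{i_0}\|<\epsilon$, and likewise $\|Px_j-Px_{j_0}\|<\epsilon$. I then fix an arbitrary pair $i\ne j$ and write $Px_i=Px_{i_0}+u$ and $Px_j=Px_{j_0}+v$ with $\|u\|,\|v\|<\epsilon$. Expanding the inner product gives
\[
\langle Px_i,Px_j\rangle=\langle Px_{i_0},Px_{j_0}\rangle+\langle Px_{i_0},v\rangle+\langle u,Px_{j_0}\rangle+\langle u,v\rangle.
\]
Using $\|Px_{i_0}\|,\|Px_{j_0}\|\le 1$ (as $\|x_{i_0}\|=\|x_{j_0}\|=1$) together with Cauchy--Schwarz, the three correction terms are bounded in absolute value by $\epsilon$, $\epsilon$ and $\epsilon^2$; since $\epsilon<1$ we have $\epsilon^2<\epsilon$, so their total is strictly less than $3\epsilon$. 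Hence $\langle Px_i,Px_j\rangle \ge (\frac12-\epsilon)-3\epsilon=\frac12-4\epsilon$, which is precisely alternative (1).

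I do not anticipate a genuine obstacle: the argument is a two-line transport estimate once the contraction property is noted. The one place to be careful is the bookkeeping of the constant. The asymmetry between the two alternatives ($\frac12-\epsilon$ versus $\frac12-4\epsilon$) is exactly the cost of transporting the bound from the distinguished pair $(i_0,j_0)$ to an arbitrary pair, and it is essential to use $\epsilon<1$ (not merely $\epsilon>0$) in order to absorb the quadratic term $\|u\|\,\|v\|\le\epsilon^2$ into the linear count of $3\epsilon$.
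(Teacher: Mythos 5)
Your proposal is correct and follows essentially the same route as the paper's proof: negate alternative (2), use Lemma \ref{lem proj} to get $\langle Px_{i_0},Px_{j_0}\rangle\ge\frac12-\epsilon$ for the distinguished pair, and then transport this bound to an arbitrary pair by expanding $\langle Px_{i_0}+u,\ Px_{j_0}+v\rangle$ with $\|u\|,\|v\|<\epsilon$ (the paper writes the perturbations as $Px$, $Py$ with $x=x_i-x_1$, $y=x_j-x_2$ after re-indexing, but the expansion and the bound $2\epsilon+\epsilon^2\le 3\epsilon$ are identical). No gaps.
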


\begin{proof}
We   assume that (2) fails and show that (1) holds. After re-indexing, we may assume
\[ \langle (I-P)x_1,(I-P)x_2\rangle < \frac{1}{2}-\epsilon.\]
By Lemma \ref{lem proj},
\[ \langle Px_1,Px_2\rangle \ge \frac{1}{2}-\epsilon.\]
Fix $1\le i\not= j \le m$ and let $x=x_i-x_1,\ y=x_j-x_2$. By assumption, $$|\langle Px_1,Py\rangle| \le \epsilon, \quad  |\langle Px_2,Px\rangle| \le \epsilon, \ \mbox{ and } 
  |\langle Px,Py\rangle |\le \epsilon^2.$$
Therefore
\begin{align*}
\langle Px_i,Px_j\rangle &= \langle Px_1+Px,Px_2+Py\rangle\\
&=\langle Px_1,Px_2\rangle + \langle Px_1,Py\rangle+\langle Px,Px_2\rangle  + \langle Px,Py\rangle\\
&\ge\frac{1}{2}-\epsilon -2\epsilon-\epsilon^2\\
&\ge \frac{1}{2}-4\epsilon.
\end{align*}  The proof is complete.
\end{proof}

\begin{corollary}\label{R4}
Let $\{x_i\}_{i=1}^m$ be a unit-norm frame for $\RR^4$. If, for all $i\not= j$, $\|x_i-x_j\|<\epsilon<\frac 18$, then $\{x_i\}_{i=1}^m$
is not piecewise scalable with  any orthogonal projection of rank 2.
\end{corollary}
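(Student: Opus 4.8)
The plan is to argue by contradiction, combining Theorem \ref{thm proj} with the fact from \cite{KO} (recalled just before Example \ref{E4}) that a spanning set of vectors lying in an open quadrant of $\R^2$ is not scalable. Suppose, toward a contradiction, that $\{x_i\}_{i=1}^m$ is piecewise scalable with some orthogonal projection $P$ of rank $2$. By Theorem \ref{T-piecewise-1}, both $\{Px_i\}_{i=1}^m$ and $\{(I-P)x_i\}_{i=1}^m$ are then scalable frames for the two-dimensional spaces $P(\R^4)$ and $(I-P)(\R^4)$, respectively; in particular each of these sets spans its space.

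Next I would invoke Theorem \ref{thm proj}. Since $0<\epsilon<\frac18$, both $\frac12-4\epsilon>0$ and $\frac12-\epsilon>0$, so one of the two alternatives there gives either $\langle Px_i,Px_j\rangle>0$ for all $i\neq j$, or $\langle (I-P)x_i,(I-P)x_j\rangle>0$ for all $i\neq j$. By symmetry it suffices to treat the first case, the second being identical with $I-P$ in place of $P$. Note first that positivity of all pairwise inner products forces every $Px_i\neq 0$, since the frame has at least two vectors and a vanishing $Px_i$ would make some product zero.

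The crux is the following planar observation, which I would isolate as a short lemma: if nonzero vectors in a two-dimensional inner product space have all pairwise inner products strictly positive, then they lie in an open quadrant with respect to a suitable orthonormal basis. Writing each $Px_i$ as a direction $\theta_i$ on the circle, positivity means every pairwise geodesic angle is strictly less than $\pi/2$; fixing one direction shows all of them lie in a common half-circle, and then the two extreme directions in that half-circle differ by less than $\pi/2$, so the whole set is contained in an open arc of width $<\pi/2$. Rotating the orthonormal basis so this arc sits inside $\{(s,t):s>0,\ t>0\}$ places all the $Px_i$ in an open quadrant. Since $\{Px_i\}_{i=1}^m$ also spans $P(\R^4)\cong\R^2$, the cited result shows $\{Px_i\}_{i=1}^m$ is \emph{not} scalable, contradicting the consequence of Theorem \ref{T-piecewise-1} noted above. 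The same contradiction arises in the second case, so no rank-$2$ projection can witness piecewise scalability.

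I expect the main obstacle to be precisely this planar lemma: converting the numerical statement ``all pairwise inner products are positive'' into the geometric statement ``the vectors sit in a common open quadrant.'' The nonscalability result in \cite{KO} is phrased in quadrant terms and requires choosing an orthonormal basis adapted to the vectors, which is legitimate because scalability is invariant under unitary (in particular orthogonal) transformations. Everything else is routine bookkeeping: checking that $\epsilon<\frac18$ makes both constants $\frac12-4\epsilon$ and $\frac12-\epsilon$ positive, and transporting the two-dimensional fact to the subspaces $P(\R^4)$ and $(I-P)(\R^4)$ via an isometric identification with $\R^2$.
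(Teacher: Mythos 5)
Your proposal is correct and follows essentially the same route as the paper: apply Theorem \ref{thm proj} with $\epsilon<\frac18$ to get strictly positive pairwise inner products for one of the two projected families, conclude that family lies in an open quadrant and hence is not scalable by the cited result, and contradict Theorem \ref{T-piecewise-1}. Your explicit planar lemma (positive pairwise inner products $\Rightarrow$ containment in an open arc of width less than $\pi/2$) just fills in a step the paper asserts without proof, and it is argued correctly.
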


\begin{proof}
It is known that a frame for $\RR^2$ is not scalable if and only if all the frame vectors lie in the same open quadrant \cite{CX, KO}. Let $P$ be any rank 2 orthogonal projection on $\RR^4$. By Theorem \ref{thm proj}, 
either 
$  \left \langle Px_i, Px_j\right \rangle\ge \frac 12-4\epsilon>0$, 
or $\langle (I-P)x_i,\ (I-P)x_j  \rangle\ge \frac 12-\epsilon> 0$ for every   $i,j\in [m], i\not=j$. 
In either case, the projections  of the frame vectors lie in an open quadrant of $\R^2$ and cannot be scalable. 
By Theorem \ref{T-piecewise-1}, the frame is not piecewise scalable.
\end{proof}

We now generalize Corollary \ref{R4} in higher dimensional spaces. First, we need a lemma.

\begin{lemma}\label{lem2S4}
If $\frac 14\le\|x\|, \|y\|\leq 1$ and $\|x-y\|<\epsilon<\frac{1}{16}$, then 
\[\left\|\frac{x}{\|x\|}-\frac{y}{\|y\|}\right\|\leq 32\epsilon.\]
\end{lemma}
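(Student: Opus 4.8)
The plan is to reduce the estimate to the two positive scalars $\|x\|$ and $\|y\|$, both lying in $[\tfrac14,1]$, and to separate the change in \emph{direction} from the change in \emph{length}. First I would insert and remove the vector $y/\|x\|$ to obtain the telescoping identity
$$\frac{x}{\|x\|}-\frac{y}{\|y\|}=\frac{1}{\|x\|}(x-y)+\Big(\frac{1}{\|x\|}-\frac{1}{\|y\|}\Big)y.$$
Applying the triangle inequality to the right-hand side then leaves two terms to estimate independently, and each is controlled by the single hypothesis $\|x-y\|<\epsilon$ together with the lower bounds on the norms.

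For the first term I would use $\|x\|\ge\tfrac14$, so that $\|x-y\|/\|x\|\le 4\|x-y\|<4\epsilon$. For the second term, note that the scalar factor is
$$\|y\|\,\Big|\frac{1}{\|x\|}-\frac{1}{\|y\|}\Big|=\frac{\big|\,\|y\|-\|x\|\,\big|}{\|x\|},$$
where the convenient cancellation of $\|y\|$ occurs. The numerator is bounded by the reverse triangle inequality $\big|\,\|x\|-\|y\|\,\big|\le\|x-y\|<\epsilon$, and $\|x\|\ge\tfrac14$ again gives a factor of $4$, so this term is also at most $4\epsilon$. Adding the two contributions yields $8\epsilon$, which is comfortably below the claimed $32\epsilon$. (Even a cruder treatment of the second term, bounding $\|y\|\le 1$ and $\|x\|\,\|y\|\ge\tfrac1{16}$ separately, gives $16\epsilon$ and a total of $20\epsilon$, still within the stated bound.)

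There is no genuine obstacle here; the argument is a routine two-sided estimate. The only points requiring care are using the \emph{reverse} triangle inequality to bound the difference of the norms, and keeping track of which lower bound is invoked where (namely $\tfrac14$ for a single norm in each denominator). The generous gap between the computed $8\epsilon$ and the stated $32\epsilon$ indicates that the constant has simply been taken large enough for the later application; in particular the hypothesis $\epsilon<\tfrac1{16}$ is not strictly needed for my version of the bound and is presumably retained only because it is the regime in which the lemma is subsequently used.
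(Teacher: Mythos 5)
Your proof is correct and is essentially the same argument as the paper's: both rest on the identity $\frac{x}{\|x\|}-\frac{y}{\|y\|}=\frac{x-y}{\|x\|}+\frac{(\|y\|-\|x\|)\,y}{\|x\|\,\|y\|}$, the reverse triangle inequality for $\bigl|\,\|x\|-\|y\|\,\bigr|$, and the lower bounds on the norms. Your bookkeeping is slightly sharper (cancelling $\|y\|$ before bounding denominators gives $8\epsilon$, whereas the paper bounds $\|x\|\,\|y\|\ge\tfrac{1}{16}$ wholesale and lands exactly on $32\epsilon$), but this is a cosmetic difference, not a different route.
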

\begin{proof}
We compute: 
\begin{align*}
\left\|\frac{x}{\|x\|}-\frac{y}{\|y\|}\right\|&=\left\|\dfrac{\|y\|x-\|x\|y}{\|x\|\|y\|}\right\|\\
&\leq16(\|\|y\|x-\|y\|y+\|y\|y-y\|x\|\|)\\
&\leq 16(\|y\|\|x-y\|+\|y\|\|x-y\|)\\
&\leq 32 ||y||\,\|x-y\|<32\epsilon, 
\end{align*}
 which is the claim.
\end{proof}

\begin{theorem} Let $\F=\{x_i\}_{i=1}^m$ be a unit-norm frame for $\R^n$ such that $\|x_i-x_j\|<\epsilon<\frac{1}{64}$, for all $i\not=j$. Then $\F$ is not piecewise scalable with any projection of rank $k$, where $2\leq k\leq n-2$.
	\end{theorem}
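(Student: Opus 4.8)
The plan is to argue exactly as in Corollary \ref{R4}: by Theorem \ref{T-piecewise-1}, if for a given rank-$k$ projection $P$ one of the families $\{Px_i\}_{i=1}^m$, $\{(I-P)x_i\}_{i=1}^m$ is not scalable in its ambient subspace, then $\F$ cannot be piecewise scalable with $P$. So it suffices to show that for \emph{every} orthogonal projection $P$ of rank $k$ with $2\le k\le n-2$, at least one of these two projected families is non-scalable. The point where the hypothesis $2\le k\le n-2$ enters is that it forces $\dim P(\R^n)=k\ge2$ \emph{and} $\dim(I-P)(\R^n)=n-k\ge2$; both subspaces being at least two-dimensional is exactly what the non-scalability argument below requires.

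First I would apply Theorem \ref{thm proj}. After relabeling we may assume its alternative (1) holds, so $\langle Px_i,Px_j\rangle\ge\tfrac12-4\epsilon>0$ for all $i\ne j$; the case where (2) holds is identical with $(I-P)$ in place of $P$, using that $(I-P)(\R^n)$ has dimension $n-k\ge2$. Since $\|Px_i-Px_j\|\le\|x_i-x_j\|<\epsilon$ and $\|Px_i\|\le1$, writing $\|Px_i\|^2=\langle Px_i,Px_j\rangle+\langle Px_i,Px_i-Px_j\rangle$ gives $\|Px_i\|^2\ge\tfrac12-5\epsilon>\tfrac1{16}$ because $\epsilon<\tfrac1{64}$, hence $\tfrac14<\|Px_i\|\le1$. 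As also $\|Px_i-Px_j\|<\epsilon<\tfrac1{16}$, Lemma \ref{lem2S4} applies to the normalized vectors $w_i:=Px_i/\|Px_i\|$ and yields $\|w_i-w_j\|\le32\epsilon<\tfrac12$ for all $i,j$. Because every $\|Px_i\|\ne0$, the family $\{Px_i\}$ is scalable in $P(\R^n)$ iff $\{w_i\}$ is, so I am reduced to showing that the clustered unit family $\{w_i\}$ is non-scalable.

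The core of the proof is thus the claim: if $V$ is a subspace with $\dim V=d\ge2$ and $\{w_i\}$ are unit vectors of $V$ with $\|w_i-w_1\|<\tfrac12$ for all $i$, then $\{w_i\}$ is not scalable in $V$. I would prove this by the standard duality for scalability. The family is scalable iff $I_V$ lies in the cone of nonnegative combinations of the rank-one operators $x\mapsto\langle x,w_i\rangle w_i$; since there are finitely many $w_i$ this cone is closed, so by the separating hyperplane theorem in the Euclidean space of symmetric operators on $V$, non-scalability is equivalent to the existence of a symmetric operator $A$ on $V$ with $\tr A>0$ and $\langle Aw_i,w_i\rangle\le0$ for every $i$. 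For such a witness I would take $A=I_V-\tfrac32P_{w_1}$, where $P_{w_1}x=\langle x,w_1\rangle w_1$ is the rank-one projection onto $\spn\{w_1\}$. Then $\tr A=d-\tfrac32\ge\tfrac12>0$, while $\langle Aw_i,w_i\rangle=1-\tfrac32\langle w_i,w_1\rangle^2$; and $\|w_i-w_1\|<\tfrac12$ forces $\langle w_i,w_1\rangle>\tfrac78$, so $\langle w_i,w_1\rangle^2>\tfrac{49}{64}>\tfrac23$ and $\langle Aw_i,w_i\rangle<0$. This witness exists precisely because $d\ge2$ provides a positive eigendirection orthogonal to $w_1$; in dimension $1$ no such $A$ exists, consistent with every nonzero family being scalable there.

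Putting the steps together, $\{w_i\}$, and therefore $\{Px_i\}$, is not scalable in $P(\R^n)$, so by Theorem \ref{T-piecewise-1} the frame $\F$ is not piecewise scalable with $P$; as $P$ was an arbitrary rank-$k$ projection with $2\le k\le n-2$, the theorem follows. I expect the main obstacle to be the duality step: stating and justifying the Farkas-type characterization of scalability in $V$ carefully enough to legitimately produce the witness $A$. By comparison the norm estimate and the passage to the unit vectors $w_i$ are routine consequences of Theorem \ref{thm proj} and Lemma \ref{lem2S4}, with the constant $\tfrac1{64}$ tuned exactly so that $32\epsilon<\tfrac12$ and hence $\langle w_i,w_1\rangle^2>\tfrac23$.
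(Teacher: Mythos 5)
Your proposal is correct, and it follows the paper's skeleton up to the decisive step: like the paper, you invoke Theorem \ref{thm proj} to place yourself (after relabeling and possibly swapping $P$ with $I-P$, which is where $2\le k\le n-2$ is used) in the situation where all $\langle Px_i,Px_j\rangle\ge\frac12-4\epsilon$, deduce $\|Px_i\|>\frac14$, and feed Lemma \ref{lem2S4} to conclude that the normalized vectors $w_i=Px_i/\|Px_i\|$ are pairwise within $32\epsilon<\frac12$ of each other. Where you genuinely diverge is in certifying non-scalability of this clustered family. The paper argues directly from the Parseval identity: assuming $\{a_iPx_i\}$ is Parseval for $P(\R^n)$, it expands $\|Px_1\|^2=\sum_j a_j^2|\langle Px_1,Px_j\rangle|^2$, bounds each term below using $\langle Px_i,Px_j\rangle\ge\bigl(1-\frac{(32\epsilon)^2}{2}\bigr)\|Px_i\|\,\|Px_j\|$ and the trace identity $\sum a_i^2\|Px_i\|^2=k$, and derives $k\le\bigl(1-\frac{(32\epsilon)^2}{2}\bigr)^{-2}<2$, contradicting $k\ge2$. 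You instead exhibit the explicit dual certificate $A=I_V-\frac32 P_{w_1}$ with $\tr A=k-\frac32>0$ and $\langle Aw_i,w_i\rangle=1-\frac32\langle w_i,w_1\rangle^2<0$ (your numerical checks $\langle w_i,w_1\rangle>\frac78$ and $\frac{49}{64}>\frac23$ are right). One remark: the ``main obstacle'' you anticipate --- justifying the Farkas-type duality --- is not actually needed. Since your witness $A$ is explicit, you only require the trivial direction: if $\sum_i c_i^2 w_iw_i^{*}=I_V$ then pairing with $A$ gives $0<\tr A=\sum_i c_i^2\langle Aw_i,w_i\rangle\le0$, a contradiction; no separating-hyperplane argument is required. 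Both routes isolate the same phenomenon (a tightly clustered family cannot tile the identity in dimension $\ge2$); the paper's computation stays entirely inside frame-operator estimates already set up in Section 4, while your certificate argument is arguably more transparent about \emph{why} $k\ge2$ is the operative hypothesis and would generalize readily to other clustering thresholds.
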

\begin{proof} 
Re-indexing and switching $P$ and $(I-P)$ if necessary, by Theorem \ref{thm proj}, we may assume 
$ \langle Px_1,Px_i\rangle \ge \frac{1}{2}-4\epsilon$  for all $2\le i \le m$.
This implies $\|Px_i\|\geq \frac12-4\epsilon>\frac 14$ for all $i$. By Lemma \ref{lem2S4}, we have
\begin{align*}
(32\epsilon)^2>\left\|\dfrac{Px_i}{\|Px_i\|}-\dfrac{Px_j}{\|Px_j\|}\right\|^2=2-2\left\langle \dfrac{Px_i}{\|Px_i\|}, \dfrac{Px_j}{\|Px_j\|}\right\rangle,
\end{align*}
and hence,
\[\langle Px_i, Px_j\rangle \geq \left(1-\dfrac{(32\epsilon)^2}{2}\right)\|Px_i\|\|Px_j\|, \mbox { for all } i\not=j.\]

We now show that $\{Px_i\}_{i=1}^m$ is not scalable. We proceed by way of contradiction by assuming that there exist constants $\{a_i\}_{i=1}^m$ such that $\{a_iPx_i\}_{i=1}^m$ is a Parseval frame for $P(\RR^n)$. This implies that $\sum_{i=1}^m a_i^2\|Px_i\|^2=k \ge 2$.
 
Moveover, we have 
\begin{align*} 
\|Px_1\|^2&=a_1^2\|Px_1\|^4+\sum_{i=2}^ma_i^2|\langle Px_1, Px_i\rangle|^2\\
&\geq a_1^2\|Px_1\|^4+\sum_{i=2}^ma_i^2\left(1-\dfrac{(32\epsilon)^2}{2}\right)^2\|Px_1\|^2\|Px_i\|^2\\
&\geq \left(1-\dfrac{(32\epsilon)^2}{2}\right)^2a_1^2\|Px_1\|^4+\sum_{i=2}^ma_i^2\left(1-\dfrac{(32\epsilon)^2}{2}\right)^2\|Px_1\|^2\|Px_i\|^2\\
&\geq \left(1-\dfrac{(32\epsilon)^2}{2}\right)^2\|Px_1\|^2\sum_{i=1}^ma_i^2\|Px_i\|^2\\
&\geq\left(1-\dfrac{(32\epsilon)^2}{2}\right)^2\|Px_1\|^2\,k.
\end{align*}
It follows that $k=\rank P\leq \dfrac{1}{\left(1-\frac{(32\epsilon)^2}{2}\right)^2}<2$, which contradicts our assumptions. Thus, $\{Px_i\}_{i=1}^m$ is not scalable and  by Theorem \ref{T-piecewise-1}, $\F$ is not piecewise scalable with projection $P$.
\end{proof}

The following theorem gives a  sufficient conditions for a unit-norm frame in $\R^4$ to be piecewise scalable with  a projection $P$ of rank $2$. 

\begin{theorem}
Let $\F=\{x_i\}_{i=1}^m$ be a unit-norm frame for $\R^4$.  Suppose  that there exist linearly independent vectors $  x_1, x_2, x_3, x_4\in \F$  such that $\langle x_2, x_4\rangle=\langle x_3, x_4\rangle =0$, and $\langle x_1, x_4\rangle \not=0$. Then $\F$ is piecewise scalable.   
	\end{theorem}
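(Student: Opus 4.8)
The plan is to build an explicit rank-$2$ orthogonal projection $P$ so that the hypotheses of Corollary \ref{C3} (the $n=4$ analogue via Theorem \ref{T-piecewise-1}) are met: namely, I want $P$ of rank $2$ such that $\{Px_1, Px_2\}$ is an orthogonal set of nonzero vectors in $P(\R^4)$ while $(I-P)x_j \neq 0$ for some $j$, OR, more symmetrically, so that the two projected pieces $\{Px_i\}_{i\in I}$ and $\{(I-P)x_i\}_{i\in I^c}$ are each scalable in their respective $2$-dimensional ranges. The orthogonality relations $\langle x_2,x_4\rangle = \langle x_3,x_4\rangle = 0$ together with $\langle x_1,x_4\rangle \neq 0$ are clearly meant to single out a good choice of $P$, and the first step is to exploit them.

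First I would set $V = \spn\{x_2, x_3\}$, a $2$-dimensional subspace (since $x_2,x_3$ are among four linearly independent vectors), and consider the orthogonal projection $P$ onto $V$ or onto a carefully chosen rotation of $V$. The key observation is that $x_4 \perp \spn\{x_2,x_3\}$, so $x_4 \in V^\perp$, which means $(I-P_V)x_4 = x_4 \neq 0$ automatically, while $P_V x_4 = 0$. Meanwhile $P_V x_2 = x_2$ and $P_V x_3 = x_3$, so $\{Px_2, Px_3\}$ spans all of $V$. Since $x_1,x_2,x_3,x_4$ are linearly independent and $x_4 \perp V$, the component of $x_1$ in $V^\perp$ is nonzero (otherwise $x_1 \in V = \spn\{x_2,x_3\}$), and since $\langle x_1,x_4\rangle \neq 0$ this component has a genuine $x_4$-direction. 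The rough idea is then to use $\{x_2,x_3\}$ (or an orthogonalization thereof inside $V$) to scale up $P(\R^4)$ to a Parseval frame, and to use $x_1,x_4$ on the complement $V^\perp$ to scale up $(I-P)(\R^4)$.

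The cleanest route is to apply Corollary \ref{C3} directly. I would aim to produce, after possibly rotating the projection, a rank-$2$ $P$ and an index set of size $n-1 = 3$ on which the $P$-images form an orthogonal set — but since $\rank P = 2$ I cannot have three orthogonal nonzero vectors in its $2$-dimensional range, so instead I would invoke the two-sided version encoded in Theorem \ref{T-piecewise-1}(2) and Corollary \ref{Cor2}: split the indices so that $\{Px_i\}$ over one part and $\{(I-P)x_i\}$ over the complementary part are each scalable in their $2$-dimensional ranges, with the cross-term condition (2)(b) forced to vanish because $a_ib_i = 0$ for every $i$ (Corollary \ref{Cor2}). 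Concretely, I would orthonormalize $\{x_2,x_3\}$ inside $V$ to get a Parseval frame for $V$ after scaling, put $a_2,a_3$ accordingly and $b_2 = b_3 = 0$; and on $V^\perp$ I would use $(I-P)x_1$ and $(I-P)x_4 = x_4$, which are two nonzero vectors in the $2$-dimensional space $V^\perp$, setting $a_1 = a_4 = 0$ and choosing $b_1, b_4$ so that $\{b_1(I-P)x_1, b_4 x_4\}$ is scalable. Since $a_ib_i = 0$ for all four indices, Corollary \ref{Cor2} immediately yields piecewise scalability.

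The main obstacle is the scalability of the pair $\{(I-P)x_1,\, x_4\}$ in the $2$-dimensional space $V^\perp$: in $\R^2$ a two-vector set is scalable if and only if the two vectors are orthogonal, so I must ensure $\langle (I-P)x_1, x_4\rangle = 0$. Here is where the freedom in choosing $P$ must be used rather than taking $P = P_V$ blindly. I expect to either (i) rotate the projection slightly within the pencil of rank-$2$ projections fixing the $x_4$-direction so as to kill this inner product, or (ii) replace $(I-P)x_1$ by a suitable representative and invoke the $n=3$ argument of Theorem \ref{n=3} inside $V^\perp \oplus \spn\{x_1\}$. The condition $\langle x_1,x_4\rangle \neq 0$ should be exactly what guarantees that such an orthogonalizing adjustment exists without collapsing any of the needed vectors to zero; verifying that the adjusted $P$ still satisfies $Px_2, Px_3 \neq 0$ and keeps $\{Px_2,Px_3\}$ a basis of the (now rotated) range is the delicate bookkeeping step, and I would handle it by a continuity/genericity argument analogous to the two-choices-of-$\lambda$ trick used at the end of the proof of Theorem \ref{n=3}.
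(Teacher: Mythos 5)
There is a genuine gap, and it sits exactly where your construction begins. You take $P$ to be the projection onto $V=\spn\{x_2,x_3\}$ and propose to ``orthonormalize $\{x_2,x_3\}$ inside $V$ \ldots after scaling.'' Scaling cannot do this: a two-element frame for a $2$-dimensional space is scalable if and only if the two vectors are already orthogonal (the same fact you correctly invoke on the $V^\perp$ side for $\{(I-P)x_1,x_4\}$), and $\langle x_2,x_3\rangle=0$ is not among the hypotheses. So with your choice of $P$ the $P$-side piece $\{Px_2,Px_3\}=\{x_2,x_3\}$ is in general not scalable in $V$, and Corollary \ref{Cor2} does not apply. Your proposed repair --- rotating $P$ within the pencil of rank-$2$ projections --- now has to satisfy \emph{two} exact orthogonality equations simultaneously ($Px_2\perp Px_3$ and $(I-P)x_1\perp(I-P)x_4$) while keeping all four projected vectors nonzero; a ``continuity/genericity'' argument cannot produce solutions of equalities (these are codimension-one conditions, not open ones), and unlike the two-choices-of-$\lambda$ trick in Theorem \ref{n=3}, you have not exhibited a one-parameter family on which a sign change forces a zero. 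As written, the existence of the adjusted $P$ is precisely the content of the theorem and is left unproved.

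The paper avoids this by pairing the vectors the other way: $\{x_1,x_2\}$ goes with $P$ and $\{x_3,x_4\}$ with $I-P$. It builds $u$ as the normalized component of $x_1$ orthogonal to $\spn\{x_2,x_4\}$ (so $u\perp x_2$, $u\perp x_4$, $\langle x_1,u\rangle\neq 0$) and $v$ as the normalized component of $x_2$ orthogonal to $\spn\{x_1,x_3,u\}$ (so $v\perp x_1$, $v\perp x_3$, $v\perp u$, $\langle x_2,v\rangle\neq 0$), and sets $P$ to be the projection onto $\spn\{u,v\}$. Then $\langle Px_1,Px_2\rangle=\langle x_1,u\rangle\langle x_2,u\rangle+\langle x_1,v\rangle\langle x_2,v\rangle=0$ and $\langle (I-P)x_3,(I-P)x_4\rangle=\langle x_3,x_4\rangle-\langle x_3,u\rangle\langle x_4,u\rangle-\langle x_3,v\rangle\langle x_4,v\rangle=0$ hold \emph{identically} by construction; the hypotheses $\langle x_2,x_4\rangle=\langle x_3,x_4\rangle=0$ and $\langle x_1,x_4\rangle\neq 0$ are used to show $z\neq 0$ and $(I-P)x_4\neq 0$. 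If you want to rescue your outline, you would need to replace the genericity step with an explicit construction of this kind.
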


\begin{proof}
We   construct an orthogonal projection $P$  of rank $2$ such that
\[Px_1 \perp Px_2 \mbox{ and } (I-P)x_3 \perp (I-P)x_4,\]
and all these vectors are nonzero.   By Corollary \ref{Cor2}, $\F$ is piecewise scalable with projection $P$.

Let $Q:\R^4\to \R^4$ be the orthogonal projection onto $\spn\{x_2, x_4\}$ and let $w=  (I- Q)x_1 $.   
Then, $w$ is orthogonal  to $x_2$ and $ x_4 $, and it is nonzero  because   the $x_i$'s are independent.   Furthermore,  $$ \langle x_1, w\rangle= \langle x_1, (I-Q) x_1\rangle=  \|(I-Q) x_1  \|^2= \|w\|^2 \ne 0.$$  
Thus, with  $u=\frac{w}{\|w\|} $ we have that $$\langle x_2, u\rangle=\langle  x_4, u\rangle=0, \mbox{ and } \langle x_1, u\rangle\ne 0.$$ Note also that $u\in \spn\{x_1, x_2, x_4\}$.
   
We  now let $R:\R^4\to\R^4$  be the orthogonal projection onto   $\spn\{x_1, x_3, u\}$   and let $z= (I-R)x_2$. So  $z$ is  orthogonal  to $x_1$,  $x_3$  and $u$. If it was $z=0$ then $x_2\in\spn\{x_1, x_3, u\}$, and  so $x_2=ax_1+bx_3+cu$ for some scalars $a, b, c$. Since $u\in \spn\{x_1, x_2, x_4\}$, we have that $u=\alpha x_1+\beta x_2+\gamma x_4$ for some $\alpha, \beta, \gamma$. 
It follows that 
\[(a+c\alpha) x_1+(c\beta-1)x_2+bx_3+c\gamma x_4=0.\]
Since $x_i$'s are linearly independent, we must have 
\[a+c\alpha=0, \ c\beta =1, \ b=0, \ c\gamma =0, \] and so $b=\gamma=0$. Thus, $u=\alpha x_1+\beta x_2$. Note that $\alpha\not=0$, since $u\perp x_2.$ But $x_4$ is orthogonal to $x_2$ and $u$, and so also  $\langle x_4, x_1\rangle =0$, a contradiction. Thus,  $z\not=0$, and 
$$ \langle x_2, z\rangle= \langle x_2, (I-R) x_2 \rangle=  \|(I-R) x_2  \|^2= \|z\|^2 \ne 0.$$  

With  $v= \frac{z}{\|z\|}$,   we have that  $$\langle x_1, v\rangle= \langle x_3, v\rangle =\langle u, v\rangle =0, \mbox { and } \langle x_2, v\rangle \not=0.$$  

Now let $P$ be the orthogonal projection onto $\spn\{u, v\}$. Then 
\[Px=\langle x, u\rangle u+\langle x, v\rangle v \ \mbox{ for } x\in \RR^4.\]
We can see at once  that
\begin{align*}
\langle Px_1, Px_2\rangle =\langle x_1, u\rangle \langle x_2, u\rangle +\langle x_1, v\rangle\langle x_2, v\rangle =0,
\end{align*}
\[\langle (I-P)x_3, (I-P)x_4\rangle =\langle x_3, x_4\rangle-\langle x_3, u\rangle \langle x_4, u\rangle -\langle x_3, v\rangle\langle x_4, v\rangle =0.\]

 To complete the proof, we need to show that $Px_1, Px_2, (I-P)x_3$, and $(I-P)x_4$ are nonzero. Clearly, $Px_1, Px_2\not=0$ since $\langle x_1, u\rangle \not=0$ and $\langle x_2, v\rangle \not=0$. Let us show that $(I-P)x_3$ and $(I-P)x_4$ are nonzero.
 
Indeed, if $x_3=Px_3=\langle x_3, u\rangle u$, then $x_3\in \spn\{x_1, x_2, x_4\}$, which is impossible. Similary, if $x_4=Px_4$, then $x_4=\langle x_4, v\rangle v.$ Since $\langle x_1, v\rangle=0$, it follows that $\langle x_1, x_4\rangle =0$. This contradicts our assumptions.
 \end{proof}

\section{ Properties of scaling constants of piecewise scalable frames}
In this section, we give some basic properties of scaling constants of piecewise scalabe frames. Let us start by showing that we cannot scale vectors of unit-norm frames by large numbers to get Parseval frames.

\begin{proposition}\label{constants}
	Let $\{x_i\}_{i=1}^m$ be a unit-norm frame for $\mathbb{R}^n$. If $\{c_ix_i\}_{i=1}^m$ is a  Parseval  frame for some nonzero scalars $\{c_i\}_{i=1}^m$, then 
	\[\max_{i}c_i^2\leq 1=\frac{\sum_{i=1}^{m}c_i^2}{n}.\]
Moreover,  $|c_i|=1$ if and only if $x_i\perp x_j$
for all $1\le j\not= i \le m$.	
\end{proposition}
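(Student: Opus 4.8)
The plan is to reformulate the Parseval hypothesis through the frame operator and then read off both assertions by elementary trace and test-vector computations. Since $\{c_ix_i\}_{i=1}^m$ is a Parseval frame, its frame operator is the identity, which in matrix form reads
\[
\sum_{i=1}^m c_i^2\, x_ix_i^T = I_n.
\]
Taking the trace of both sides and using $\tr(x_ix_i^T)=\|x_i\|^2=1$ (here the unit-norm hypothesis enters), I obtain $\sum_{i=1}^m c_i^2 = \tr(I_n)=n$, which is exactly the equality $\frac{1}{n}\sum_{i=1}^m c_i^2 = 1$.

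For the inequality $\max_i c_i^2\le 1$ and the equality characterization, I would evaluate the Parseval identity at the particular frame vector $x=x_i$. Since $\{c_jx_j\}_{j=1}^m$ is Parseval and $\|x_i\|=1$,
\[
1=\|x_i\|^2=\sum_{j=1}^m |\langle x_i, c_jx_j\rangle|^2 = c_i^2\,|\langle x_i,x_i\rangle|^2 + \sum_{j\ne i} c_j^2\,|\langle x_i,x_j\rangle|^2 = c_i^2 + \sum_{j\ne i} c_j^2\,|\langle x_i,x_j\rangle|^2,
\]
where I separated the $j=i$ term and used $\|x_i\|=1$ once more. This single identity yields everything: because every summand on the right is nonnegative, $c_i^2\le 1$ for each $i$, hence $\max_i c_i^2\le 1$. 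Furthermore $c_i^2=1$ holds precisely when $\sum_{j\ne i} c_j^2\,|\langle x_i,x_j\rangle|^2=0$; since the $c_j$ are all nonzero, this sum of nonnegative terms vanishes if and only if $\langle x_i,x_j\rangle=0$ for every $j\ne i$, i.e. $x_i\perp x_j$ for all $j\ne i$, which is the claimed equivalence.

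I do not expect a genuine obstacle here, as the argument is short. The only step requiring slight care is the choice of test vector: one must evaluate the Parseval identity at $x_i$ itself, not at $c_ix_i$, and then correctly peel off the diagonal term as $c_i^2$ (using $\|x_i\|=1$) so that the remaining cross terms assemble into the key identity displayed above. Once that identity is in hand, both the bound and its equality case — together with the essential use of $c_j\ne 0$ to force orthogonality — follow immediately.
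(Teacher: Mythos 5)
Your proof is correct and follows essentially the same route as the paper: the trace identity $\sum_i c_i^2 = n$ for the Parseval frame, plus evaluation of the Parseval identity at the frame vectors themselves. The only (harmless) difference is that you extract both the bound $c_i^2\le 1$ and the full equality characterization from the single identity $1 = c_i^2 + \sum_{j\ne i} c_j^2|\langle x_i,x_j\rangle|^2$, whereas the paper obtains the bound from the inequality $\|c_ix_i\|^4\le\|c_ix_i\|^2$ and then treats the equality case (and its "obvious" converse) separately; your version is marginally more streamlined.
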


	\begin{proof}
		For any $i\in [m]$, we have that
		\[\|c_ix_i\|^4\leq \sum_{j=1}^{m}|\langle c_ix_i, c_jx_j\rangle|^2= \|c_ix_i\|^2.\]
		It follows that $c_i^2\leq 1 $ for all $i$. Since this is a Parseval frame, we have
\[ \sum_{i=1}^m\|c_ix_i\|^2= \sum_{i=1}^mc_i^2=n.\] The second equality follows.

	To prove the second part of the proposition, we assume that $|c_i|=1$ for some $i$. Then
		\[ \|x_i\|^2=\sum_{j=1}^m|\langle x_i,c_jx_j\rangle|^2= |\langle x_i,x_i\rangle|^2 + \sum_{i\not= j =1}^m|\langle x_i,c_jx_j\rangle|^2.\]
		So $|c_j||\langle x_i,x_j\rangle |=|\langle x_i,c_jx_j\rangle|=0$ for all $1\le j\not= i \le 1$. 
		
		The converse is obvious.
	\end{proof}

 The following propositions give a bound for the scaling constants of a piecewise  scalable frame.
\begin{proposition}
	Let $\F=\{x_i\}_{i=1}^m$ be a unit-norm frame for $\RR^n$. If $\F$ is piecewise scalable with constants $\{a_i, b_i\}_{i=1}^m$, then
	\[n\leq \sum_{i=1}^{m}\max(a^2_i, b^2_i).\]
\end{proposition}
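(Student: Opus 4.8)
The plan is to use the splitting provided by Theorem~\ref{T-piecewise-1}, which breaks the piecewise scaling into two independent Parseval frames, and then to combine a trace (sum-of-squared-norms) identity with an elementary $\max$-inequality. The unit-norm hypothesis enters at exactly one point, and it is what makes the final bound clean.

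First I would invoke Theorem~\ref{T-piecewise-1}(2)(a): since $\F$ is piecewise scalable with projection $P$, say of rank $k$, and constants $\{a_i,b_i\}_{i=1}^m$, the sets $\{a_iPx_i\}_{i=1}^m$ and $\{b_i(I-P)x_i\}_{i=1}^m$ are Parseval frames for $P(\R^n)$ and $(I-P)(\R^n)$, which have dimensions $k$ and $n-k$, respectively. For any Parseval frame in a $d$-dimensional space the frame operator is the identity, so taking traces shows that the sum of the squared norms of the frame vectors equals $d$. Applying this to each piece gives
\[
\sum_{i=1}^m a_i^2\|Px_i\|^2 = k, \qquad \sum_{i=1}^m b_i^2\|(I-P)x_i\|^2 = n-k,
\]
and adding these two identities yields $\sum_{i=1}^m\bigl(a_i^2\|Px_i\|^2+b_i^2\|(I-P)x_i\|^2\bigr)=n$.

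Next I would bound each summand from above. Since $P$ is an orthogonal projection, $\|Px_i\|^2+\|(I-P)x_i\|^2=\|x_i\|^2=1$ by the unit-norm hypothesis, so replacing both weights by the larger one gives
\[
a_i^2\|Px_i\|^2+b_i^2\|(I-P)x_i\|^2 \le \max(a_i^2,b_i^2)\bigl(\|Px_i\|^2+\|(I-P)x_i\|^2\bigr)=\max(a_i^2,b_i^2).
\]
Summing over $i$ and comparing with the identity above produces $n\le\sum_{i=1}^m\max(a_i^2,b_i^2)$, as desired.

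This argument is essentially a direct computation, so I do not expect a serious obstacle; the only thing to get right is the trace identity for Parseval frames and its correct application on each of the two orthogonal pieces (of dimensions $k$ and $n-k$ summing to $n$). The unit-norm hypothesis is used precisely to guarantee $\|Px_i\|^2+\|(I-P)x_i\|^2=1$, which is what converts the $\max$-bound into the stated inequality.
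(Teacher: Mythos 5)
Your proof is correct and is essentially the paper's argument: both reduce to the identity $\sum_{i=1}^m\bigl(a_i^2\|Px_i\|^2+b_i^2\|(I-P)x_i\|^2\bigr)=n$ followed by the same $\max$-bound using $\|Px_i\|^2+\|(I-P)x_i\|^2=1$. The only (cosmetic) difference is that you obtain the identity as $k+(n-k)$ from the two projected Parseval frames, while the paper reads it off directly as the trace identity for the single Parseval frame $\{a_iPx_i+b_i(I-P)x_i\}_{i=1}^m$.
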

\begin{proof} Let $P$ be an orthogonal projection for which $\{a_iPx_i+b_i(I-P)x_i\}_{i=1}^m$ is a Parseval frame for $\RR^n$.
	For any $i\in [m]$, we have
	\begin{align*}
		\|a_iPx_i+b_i(I-P)x_i\|^2&=\|a_iPx_i\|^2+\|b_i(I-P)x_i\|^2\\
		&\leq \max(a_i^2, b_i^2)(\|Px_i\|^2+(I-P)x_i)\|^2\\
		&=\max(a_i^2, b_i^2).
	\end{align*}
	From $\displaystyle \sum_{i=1}^{m}\|a_iPx_i+b_i(I-P)x_i\|^2=n$, we get the claim.
\end{proof}
{ 
	\begin{proposition}
		Let $\{x_i\}_{i=1}^m$ be a unit-norm frame for $\RR^n$. Let   $P$ be an orthogonal projection. If $\{a_iPx_i\}_{i=1}^m$ and $\{b_i(I-P)x_i\}_{i=1}^m$ are   Parseval frames for $P(\mathbb{R}^n)$ and $(I-P)(\mathbb{R}^n)$ respectively, then 
		\[|a_ib_i|\leq \sqrt{a_i^2+b_i^2} \text{ and } \min(|a_i|, |b_i|)\leq \sqrt{2} \text{ for all } i\in [m].\]
	\end{proposition}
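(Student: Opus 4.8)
The plan is to extract from the Parseval property of each piece an upper bound on $a_i^2$ and $b_i^2$ in terms of the norms $\|Px_i\|$ and $\|(I-P)x_i\|$, and then to combine these with the unit-norm identity $\|Px_i\|^2 + \|(I-P)x_i\|^2 = 1$. First I would recall the elementary fact, already used in the proof of Proposition \ref{constants}, that every vector $y$ of a Parseval frame satisfies $\|y\|^4 \le \sum_j |\langle y, y_j\rangle|^2 = \|y\|^2$, hence $\|y\|^2 \le 1$. Applying this to the Parseval frame $\{a_iPx_i\}_{i=1}^m$ for $P(\RR^n)$ gives $a_i^2\|Px_i\|^2 \le 1$, and applying it to $\{b_i(I-P)x_i\}_{i=1}^m$ gives $b_i^2\|(I-P)x_i\|^2 \le 1$. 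Writing $p_i = \|Px_i\|^2$ and $q_i = \|(I-P)x_i\|^2$, the unit-norm hypothesis yields $p_i + q_i = 1$, while the two bounds read $a_i^2 p_i \le 1$ and $b_i^2 q_i \le 1$.

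For the first inequality I would assume $a_i, b_i \ne 0$, the case where one of them vanishes being trivial since then $|a_ib_i| = 0 \le \sqrt{a_i^2+b_i^2}$. The bounds above give $\tfrac{1}{a_i^2} \ge p_i$ and $\tfrac{1}{b_i^2} \ge q_i$, so
\[
\frac{1}{a_i^2} + \frac{1}{b_i^2} \ge p_i + q_i = 1.
\]
Multiplying through by $a_i^2 b_i^2$ rearranges this into $a_i^2 b_i^2 \le a_i^2 + b_i^2$, which is exactly $|a_ib_i| \le \sqrt{a_i^2 + b_i^2}$.

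For the second inequality I would use that $p_i + q_i = 1$ forces at least one of $p_i, q_i$ to be $\ge \tfrac12$. If $p_i \ge \tfrac12$, then $a_i^2 \le 1/p_i \le 2$, so $|a_i| \le \sqrt2$; if instead $q_i \ge \tfrac12$, then $b_i^2 \le 1/q_i \le 2$, so $|b_i| \le \sqrt2$. In either case $\min(|a_i|, |b_i|) \le \sqrt2$.

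I expect no serious obstacle: the core of both claims is simply the Parseval norm bound combined with $p_i + q_i = 1$. The only care needed is in the degenerate cases where $Px_i = 0$ or $(I-P)x_i = 0$, so that one of $p_i, q_i$ vanishes and the corresponding scalar is left unconstrained by its Parseval condition. These are absorbed harmlessly: for the first inequality the inequality $\tfrac{1}{a_i^2}\ge p_i$ still holds (its right side being $0$), and for the second inequality, when say $p_i = 0$ we have $q_i = 1 \ge \tfrac12$, which already delivers $|b_i| \le 1 \le \sqrt2$ and hence $\min(|a_i|,|b_i|)\le\sqrt2$.
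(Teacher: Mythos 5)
Your proof is correct and follows essentially the same route as the paper: both rest on the Parseval bound $a_i^2\|Px_i\|^2\le 1$, $b_i^2\|(I-P)x_i\|^2\le 1$ combined with $\|Px_i\|^2+\|(I-P)x_i\|^2=1$. The only cosmetic differences are that the paper multiplies the two bounds by $b_i^2$ and $a_i^2$ respectively and adds (which avoids your case split on $a_ib_i=0$), and for the second claim it cases on $|a_i|\le|b_i|$ rather than on which of $\|Px_i\|^2,\|(I-P)x_i\|^2$ is at least $\tfrac12$.
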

	\begin{proof}
		Since $\{a_iPx_i\}_{i=1}^m$ is a Parseval frame, for any $i\in [m]$, we have
		\[\|a_iPx_i\|^4\leq \sum_{j=1}^{m}|\langle a_iPx_i, a_jPx_j\rangle|^2=\|a_iPx_i\|^2. \]
		It follows that
		\[a^2_ib^2_i\|Px_i\|^2\leq b_i^2 \text{ for all } i.\]
		Similarly, 
		\[a_i^2b_i^2\|(I-P)x_i\|^2\leq a_i^2\ \text{ for al } i.\]
		Since $\|Px_i\|^2+\|(I-P)x_i\|^2=\|x_i\|^2=1$, the first conclusion follows.
		
		To see the second conclusion, note that 
		\[a_i^2\|Px_i\|^2\leq 1 \text{ and } b_i^2\|(I-P)x_i\|^2\leq 1 \text{ for all } i.\]
		If $|a_i|\leq |b_i|$ then 
		$$a_i^2=a_i^2(\|Px_i\|^2+(I-P)x_i\|^2)\leq a_i^2\|Px_i\|^2+b_i^2\|(I-P)x_i\|^2\leq 2.$$
		Similarly, if $|b_i|\leq |a_i|$, then $b_i^2\leq 2$. Thus, $\min(|a_i|, |b_i|)\leq \sqrt{2}$ for $i\in [m]$.
	\end{proof}
	
	\medskip
	Unlike the properties of scaling constants of scalable frames as shown in Proposition \ref{constants},  for piecewise scalable frames, the constants can be arbitrarily large. The reason is that the vectors $Px_i $ or $(I-P)x_i $ might have tiny norms.
	
	\begin{example}\label{E412} Let $\epsilon>0$ and let $\{x_i\}_{i=1}^2$ be a unit-norm frame for $\mathbb{R}^2$, where 
		\[x_1=(\epsilon, \sqrt{1-\epsilon^2}), \quad x_2=(\sqrt{2}/2, \sqrt{2}/2). \]
		Let $P$ be the orthogonal projection onto $\spn \{e_1\}$. Then 
		\[Px_1=(\epsilon,0), \quad (I-P)x_1=(0,\sqrt{1-\epsilon^2}),\]  \[Px_2=(\sqrt{2}/2, 0), \quad (I-P)x_2=(0,\sqrt{2}/2).\]
		We can see that $\{a_iPx_i+b_i(I-P)x_i\}_{i=1}^2$ is a Parseval frame with $a_1=1/\epsilon, b_1=0, a_2=0, b_2=\sqrt{2}.$
	\end{example}

\end{document}